\newcommand{\OBSI}{\begin{remark}\begin{rm}}
\newcommand{\OBSF}{\end{rm}\end{remark}}
\newcommand{\DEFI}{\begin{definition}\begin{rm}}
\newcommand{\DEFF}{\end{rm}\end{definition}}
\newcommand{\be}{\begin{eqnarray}}
\newcommand{\en}{\end{eqnarray}}
\newcommand{\bee}{\begin{eqnarray*}}
\newcommand{\ene}{\end{eqnarray*}}
\DeclareMathOperator*{\essinf}{ess.inf}
\DeclareMathOperator*{\esssup}{ess.sup}
\DeclareMathOperator*{\supp}{supp}
\newtheorem{definition}{\bf Definition}[section]
\newtheorem{proposition}{\bf Proposition}[section]
\newtheorem{lemma}{\bf Lemma}[section]
\newtheorem{theorem}{\bf Theorem}[section]
\newtheorem{corollary}{\bf Corollary}[section]
\newtheorem{remark}{Remark}[section]
\newtheorem{example}{Example}[section]
\newcounter{SE}
\newenvironment{SE}{\refstepcounter{SE}\equation}{\tag{SE}\endequation}
\title{Some generic fractal  properties of bounded\\ self-adjoint operators}
\author{M. Aloisio\thanks{Corresponding author. Email: moacir@ufam.edu.br}, S. L. Carvalho, and C. R. de Oliveira}
\date{June 2021}
\begin{document}

\maketitle

\begin{abstract} We study generic fractal properties of  bounded self-adjoint operators through  lower and  upper generalized fractal dimensions of their spectral measures. Two groups of results are presented. Firstly, it is shown that the set of vectors whose associated spectral measures have lower (upper) generalized fractal dimension equal to zero (one) for every $q>1$ ($0<q<1$) is either empty or generic. The second one gives sufficient conditions, for separable regular spaces of operators, for the presence of  generic extreme dimensional values; in this context, we have a new proof of the celebrated Wonderland Theorem.
\end{abstract}  

\renewcommand{\thetable}{\Alph{table}}


\section{Introduction}

Spectral and dynamical properties of self-adjoint operators have a fundamental role in quantum mechanics, and there are many subtleties among them; for instance: 1)~any self-adjoint operator may be approximated (with respect to the Hilbert-Schmidt norm) by a pure point operator (this is the Weyl-von Neumann Theorem~\cite{vonNeumann,Weyl}); 2)~in some topological spaces of self-adjoint operators, the set of elements with purely singular continuous spectra is generic (the conclusion of the so-called Wonderland Theorem~\cite{SimonWonderland}); 3)~dense point spectrum imply a form of dynamical instability \cite{ACdeOLMP}; etc. Here, we present two new subtle properties related to generic dimensional properties of spectral measures, which are summarized in Theorems~\ref{stabtheorem} and~\ref{mainproposion}. For technical simplicity, we restrict ourselves to bounded self-adjoint operators~$T$ acting on the complex and separable Hilbert space~$\mathcal H$; we denote by $\mu_\psi^T$ the spectral measure  of~$T$ associated with the state $\psi\in\mathcal H$; for each Borel set $\Lambda \subset \mathbb{R}$, $P^T(\Lambda)$ represents the spectral resolution of $T$ over $\Lambda$; by $\mu$ we always mean a finite nonnegative Borel measure on~$\mathbb{R}$. Here, for every complete metric space~$X$, we say that $\mathcal{R} \subset X$ is residual if it contains a generic (i.e., a dense $G_\delta$) set in~$X$. 

The main results are described in Subsections~\ref{subsectDimHer} and~\ref{subsectGenFPO}, along with some examples and dynamical consequences. In Section~\ref{secpreliminarie} we recall some concepts and results regarding dimensional properties of nonnegative Borel measures. The proofs of Theorems~\ref{stabtheorem} and~\ref{mainproposion} are left to Section~\ref{mainsingularsection}. 


\subsection{A dimensional heritage}\label{subsectDimHer}
Let $T$ be a bounded self-adjoint operator on $\mathcal{H}$, pick two vectors $\psi,\varphi\in\mathcal H$ and, for each $k\in\mathbb N$, set 
\[
\psi_k = \psi+\frac1k \varphi;
\]
although $\psi_k\to\psi$ as $k\to\infty$, it is not clear which properties of~$\psi$ and/or~$\psi_k$ are inherited  from~$\varphi$. E.g., if $\varphi$ belongs to the point subspace of~$T$, this property is clearly not preserved if~$\psi$ belongs to the continuous subspace; moreover, $\psi_k$ is a ``mixed vector.''Roughly, the first result in this work says that for each $k\in\mathbb{N}$, (some) values of the generalized fractal dimensions of~$\mu^T_{\psi_k}$ satisfy the same bounds as the values of~$\mu^T_{\varphi}$, being therefore,  held by a large set of spectral measures associated with $T$. Roughly, the idea is to show that $\mu^T_{\psi_k}$ inherits such dimensional properties from $\mu^T_{\varphi}$, so the
set of the vectors whose (some) values of the generalized fractal dimensions of the spectral measures satisfy the same bounds as the values of~$\mu^T_{\varphi}$ is dense in $\mathcal{H}$ (since $\psi$ is arbitrary in $\mathcal{H}$), and then combine this with suitable $G_\delta$ properties, proven in \cite{ACdeOLMP}, to show that they hold for generic sets.

Before we present a precise formulation of this result (see Subsection~\ref{subsectProofThmHerig} for its proof), we need a small preparation. For $q >0$, let  $D_\mu^-(q)$ and $D_\mu^+(q)$ (see Definition~\ref{GFDdefinition} ahead) denote the {\em lower} and the {\em upper generalized fractal dimensions} of~$\mu$, respectively; recall also that the functions $q\mapsto D_\mu^\mp(q)$ are nonincreasing and if~$\mu$ has bounded support, then $0\le D_\mu^-(q)\le D_\mu^+(q)\le 1$, for all~$q>0$ \cite{Barbaroux!997} (see also Proposition~\ref{GFDproposition} ahead). 

\begin{theorem}\label{stabtheorem} Let $T$ be a bounded self-adjoint operator on $\mathcal{H}$ and  $\alpha,\beta \geq 0$.  
\begin{enumerate}
\item Let $s >1$. If there exists $0 \neq \varphi \in \mathcal{H}$ such that $D_{\mu_{\varphi}^T}^-(s) \leq \beta$, then 
\[\Lambda^-(T,s,\beta) := \{\psi \in {\mathcal{H}} \mid D_{\mu_\psi^T}^-(s) \leq  \beta\}\]
is a dense $G_\delta$ set (i.e., a generic set) in $\mathcal{H}$. 
\item Let $0<q<1$. If there exists $0 \neq \varphi \in \mathcal{H}$ such that $D_{\mu_{\varphi}^T}^+(q) \geq \alpha$, then
\[\Lambda^+(T,q,\alpha) := \{\psi \in {\mathcal{H}} \mid D_{\mu_\psi^T}^+(q) \geq  \alpha\}\]
is a dense $G_\delta$ set in $\mathcal{H}$. 
\end{enumerate}
\end{theorem}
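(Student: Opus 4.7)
The approach is to split the proof into a $G_\delta$ part and a density part, and handle both items in parallel. Since \cite{ACdeOLMP} already establishes that both $\{\psi \in \mathcal{H} : D^-_{\mu^T_\psi}(s) \leq \beta\}$ and $\{\psi \in \mathcal{H} : D^+_{\mu^T_\psi}(q) \geq \alpha\}$ are $G_\delta$ subsets of $\mathcal{H}$, the whole content of the theorem reduces to verifying that these sets are dense whenever a single witness $\varphi$ exists.

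For density I fix an arbitrary $\psi \in \mathcal{H}$ and work with the \emph{two} perturbations $\psi_k^{\pm} := \psi \pm \frac{1}{k}\varphi$, both of which converge to $\psi$ as $k \to \infty$. Applying the parallelogram identity inside the range of each spectral projection $P^T(B)$, $B \subset \mathbb{R}$ Borel, yields
\[
\mu^T_{\psi_k^+}(B) + \mu^T_{\psi_k^-}(B) \;=\; 2\,\mu^T_\psi(B) + \frac{2}{k^2}\,\mu^T_\varphi(B) \;\geq\; \frac{2}{k^2}\,\mu^T_\varphi(B).
\]
I transfer this pointwise bound to the partition sum $S_\nu(q,\epsilon) = \sum_i \nu(B_i)^q$ underlying the generalized fractal dimensions (cf.\ Section~\ref{secpreliminarie}) via the elementary inequality $(a+b)^q \leq C_q(a^q+b^q)$ --- with $C_q=2^{q-1}$ when $q>1$ and $C_q=1$ when $0<q<1$ --- and sum box by box over any $\epsilon$-partition. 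This yields a constant $c(k,q)>0$, independent of $\epsilon$, such that
\[
c(k,q)\,S_{\mu^T_\varphi}(q,\epsilon) \;\leq\; S_{\mu^T_{\psi_k^+}}(q,\epsilon) + S_{\mu^T_{\psi_k^-}}(q,\epsilon)
\]
for every $\epsilon$. Consequently $\max\!\bigl(S_{\mu^T_{\psi_k^+}}(q,\epsilon),\,S_{\mu^T_{\psi_k^-}}(q,\epsilon)\bigr) \geq \tfrac{1}{2}c(k,q)\,S_{\mu^T_\varphi}(q,\epsilon)$ pointwise in $\epsilon$.

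To finish item~(1), pick a sequence $\epsilon_n \downarrow 0$ realizing the $\liminf$ in $D^-_{\mu^T_\varphi}(s) \leq \beta$; a pigeonhole argument selects $\sigma_k \in \{+,-\}$ and an infinite subsequence along which $S_{\mu^T_{\psi_k^{\sigma_k}}}(s,\cdot)$ is the dominant term. Dividing the logarithm of the resulting bound by $(s-1)\log\epsilon_n \to -\infty$ (the additive constant $\log(c(k,s)/2)$ vanishes in the limit) gives $D^-_{\mu^T_{\psi_k^{\sigma_k}}}(s) \leq \beta$, so $\psi_k^{\sigma_k} \in \Lambda^-(T,s,\beta)$ and $\psi_k^{\sigma_k} \to \psi$, yielding density. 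Item~(2) is identical modulo replacing $\liminf$ by $\limsup$ and using $(q-1)\log\epsilon_n \to +\infty$ for $0<q<1$. The main obstacle is the quadratic nature of $\psi \mapsto \mu^T_\psi$: the cross term $\langle \psi, P^T(\cdot)\varphi\rangle$ is a complex-valued signed measure that obstructs any naive additive decomposition of $\mu^T_{\psi_k}$. Using \emph{both} perturbations $\psi_k^{\pm}$ simultaneously converts this quadratic obstruction into the positive lower bound above, at the harmless cost of losing a priori control over which sign $\sigma_k$ inherits the dimensional behavior.
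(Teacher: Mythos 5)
Your proof is correct, and it takes a genuinely different route from the paper's. Both proofs face the same obstruction you identify --- the cross term $\frac{2}{k}\operatorname{Re}\langle P^T(B)\psi,P^T(B)\varphi\rangle$ in $\mu^T_{\psi+\varphi/k}(B)$ --- but resolve it differently. The paper kills the cross term by \emph{spectral orthogonalization}: it constructs a decreasing sequence of compact intervals $A_k\downarrow\{\Gamma\}$ on which the restricted measures $\mu^T_\varphi(\cdot\cap A_k)$ still witness the dimensional bound (a halving/pigeonhole construction on the support of $\mu^T_\varphi$), and then perturbs $P^T(\mathbb{R}\setminus A_k)\psi$ rather than $\psi$ itself, so that the cross term vanishes identically on $A_k$; this forces a case split (eigenvalue vs.\ purely continuous spectrum in item~1, removal of a possible atom at $\Gamma$ in item~2) to guarantee $\psi_k\to\psi$. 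Your two-sign parallelogram trick --- a device in the spirit of Simon's Wonderland paper and del Rio--Jitomirskaya--Last--Simon --- converts the cross term into the positive lower bound $\mu^T_{\psi_k^+}(B)+\mu^T_{\psi_k^-}(B)\ge\frac{2}{k^2}\mu^T_\varphi(B)$ and dispenses with the interval construction and the case analysis entirely, at the harmless cost of a pigeonhole over the sign; combined with Proposition~\ref{propgdelta} for the $G_\delta$ part, this is a complete and shorter proof. Two small points of presentation: (i) you should run the argument through the mean dimensions $m^\mp_\mu$ of Definition~\ref{defMeanDim} (i.e., bound $\epsilon^{-1}\int\mu(B(x,\epsilon))^q\,\mathrm{d}x$ pointwise in $x$ and invoke Proposition~\ref{GFDproposition}~\emph{1.}), since Section~\ref{secpreliminarie} does not introduce partition sums $\sum_i\nu(B_i)^q$; the inequality $(a+b)^q\le C_q(a^q+b^q)$ applies verbatim at each $x$; (ii) note that if one of $\psi\pm\frac1k\varphi$ vanishes its $J$-integral is zero, so the maximum automatically selects the nonzero vector. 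What the paper's longer argument buys in exchange is explicit approximants whose dimensional behavior is inherited from restrictions of $\mu^T_\varphi$ to arbitrarily small intervals, i.e., a localization of the fractal behavior near a single point of the spectrum; for the theorem as stated, both approaches suffice.
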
  

\begin{example}[Rank-one perturbation of the almost-Mathieu operator]\label{Mathieu}{\rm  Write $\delta_1 = (\delta_{1,n})_{n\in\mathbb{Z}}$ and  let~$H$ be a rank-one perturbation of a quasi-periodic operator, acting on $\ell^2(\mathbb{Z})$, given by the law
\[(Hu)_n=(H_{\lambda,\alpha,\theta,\kappa}u)_n := u_{n+1} + u_{n-1} + \kappa  \cos(\pi \alpha n + \theta) +  \lambda \langle \cdot,\delta_1 \rangle \delta_1,\]
where $\lambda \in [0,1]$, $\alpha \in [0,2\pi)$, $\theta \in [0,2\pi)$ and $\kappa>2$.  It was shown in \cite{GKT} that there exists a dense $G_\delta$ set of irrational numbers~$\Omega\subset [0,2\pi)$ such that, for every $\alpha \in \Omega$, every $\theta, \lambda, \kappa>2$ and every $q \in  (0, 1)$, $D_{\mu_{\delta_1}^{H}}^+(q) = 1$. It follows from Theorem~\ref{stabtheorem} that, for each $\alpha \in \Omega$ and each $0<q<1$,
\[
\big\{ \psi \in \ell^2(\mathbb{Z}) \mid D_{\mu_\psi^{H}}^+(q) =1\big\}
\]
is a  generic set in $\ell^2(\mathbb{Z})$.  This example is particularly interesting because, for such parameter values, the spectrum of $H_{\lambda,\alpha,\theta,\kappa}$ is always purely singular (see \cite{GKT}), and generically with maximum value of the upper dimensions ($0<q<1$), a result intuitively associated with absolutely continuous spectrum.
}
\end{example}

\begin{example}[Continuous one-dimensional free Hamiltonian]\label{freexample}{\rm  Let $H_0:\mathcal{H}^2(\mathbb{R}) \subset {\rm L}^2(\mathbb{R}) \rightarrow {\rm L}^2(\mathbb{R})$ be given by the law $(H_0 \psi)(x) = -\psi''(x)$, and set $H:=H_0 P^{H_0}([0,1])$ (a bounded self-adjoint operator). For each ${\theta_n} = \frac{1}{2}- \frac{1}{n+2}$, let  $\psi_{n} \in {\rm L}^2(\mathbb{R})$ be such that its Fourier transform satisfies, for each $t>0$, 
\[\widehat{\psi_{n}}(t) = \chi_{[0,1]}(t)\, t^{-{\theta_n}}.\]
It turns out that (\cite{Oliveira}, Section~8.4.1)
\[{\rm d}\mu_{{\psi_{n}}}^{H}(x) = \frac12 \chi_{[0,1]}(x)\, x^{-(\theta_n +1/2)} {\rm d}x.\]
It is straightforward to check that, for each $n\in\mathbb{N}$ and each $s\geq 2$,
\[D_{\mu_{{\psi_{n}}}^{H}}^{\mp}(s) \leq 1-2\theta_n = \frac{2}{n+2}.\]
Therefore,  by Theorem \ref{stabtheorem}, for every $s\geq 2$,
\[\
\big \{ \psi \in {{\rm L}^2(\mathbb{R})} \mid D_{\mu_\psi^{H}}^-(s) =0\big\} = \bigcap_{n \geq 1} \Lambda^-(H,s,\theta_n)
\]
is a dense $G_\delta$ set in ${\rm L}^2(\mathbb{R})$. This example is interesting because~$H$  has purely absolutely continuous spectrum and, generically, with minimum values of such lower dimensions, a result intuitively associated with singular spectrum.
} 
\end{example}
  
\begin{remark}\label{mainremark1}{\rm To the best knowledge of the present authors, the result presented in Example \ref{freexample} leads to a phenomenon which has never been discussed: there exist an operator whose spectrum is purely absolutely continuous and a generic set of vectors whose time-average return probabilities decay with arbitrarily slow polynomial rates (for sequences of time $t_j \rightarrow \infty$). Namely, in this case, generically in $\psi \in {{\rm L}^2(\mathbb{R})}$ (by (\ref{prob}) just ahead), for every $k\ge1$,
\[\limsup_{t \to \infty}  \frac{t^{1/k}}{t} \int_0^t |\langle \psi, e^{-iHt}\psi \rangle|^2 {\rm d}s = \infty.\]
We note that this is, in some sense, the counterpart of the following situation: an operator with pure point spectrum and a generic set of states whose spectral measures have maximal upper generalized dimension (such is the case of the operator discussed in Example~\ref{Mathieu}); such states are, therefore, delocalized (see Subsection~\ref{subsecdyn} for details and~\cite{ACdeOLMP,GKT}).}
\end{remark}

Next we turn to the second group of generic results in this work.


\subsection{Generic fractal properties of spectral measures}\label{subsectGenFPO}

Recall that  a metric space $(X,d)$ of self-adjoint operators acting in~$\mathcal{H}$ is {\em regular}~\cite{SimonWonderland} if it is complete and convergence in the metric~$d$ implies strong resolvent convergence.  Denote by $C_{\mathrm{p}}=C_{\mathrm{p}}(X)$ the set of operators $T\in X$ with pure point spectrum and by  $C_{\mathrm{ac}}=C_{\mathrm{ac}}(X)$  the set of operators $T\in X$ with purely absolutely continuous spectrum.  

Under some assumptions, a version of the Wonderland Theorem related to extreme correlation dimensional values (i.e., $D_{\mu_\psi^{T}}^-(2)=0 $ and $D_{\mu_\psi^{T}}^+(2)=1$) of spectral measures was proven in~\cite{CarvalhoCorrelation}, and dynamical consequences were explored. In the following, we extend this result to all dimensions $D_{\mu_\psi^{T}}^\mp(q)$, $q>0$, for separable regular sets of bounded self-adjoint operators. These fine dimensional properties will also imply generic singular continuous spectrum; such  results are gathered in the next  statements. As a spinoff, for such spaces we have a new proof of the Wonderland Theorem (see Corollary~\ref{FWT}). 

\begin{theorem}\label{mainproposion} Let~$X$ be a separable regular space of bounded self-adjoint operators. If both sets $C_{\mathrm{p}}$ and $C_{\mathrm{ac}}$ are dense in~$X$, then there exists a generic set $\mathcal{M}$  in $\mathcal{H}$ such that, for each $\psi \in \mathcal{M}$, the set  $X_{{01}}(\psi) := \{T\in X \mid D_{\mu_{\psi}^T}^-(q) =0$ and $D_{\mu_{\psi}^T}^+(q) =1$, for all  $q >0\}$ is residual in~$X$.
\end{theorem}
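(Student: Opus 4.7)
My plan is a two-level Baire category argument. I will exhibit the generic set $\mathcal{M} \subset \mathcal{H}$ as a countable intersection of $\mathcal{H}$-generic sets indexed by countable dense subsets of $C_{\mathrm{p}}$ and $C_{\mathrm{ac}}$ (supplied by separability of $X$ together with the hypothesis). The main ingredients will be: (a) the $G_\delta$ character, in each variable separately, of the sub-/super-level sets of $(T,\psi) \mapsto D^\mp_{\mu_\psi^T}(q)$, coming from semicontinuity under strong resolvent convergence and established in~\cite{ACdeOLMP}; (b) Theorem~\ref{stabtheorem}; and (c) the monotonicity of $q \mapsto D^\mp_\mu(q)$, which will let me trade the uncountable condition ``for all $q>0$'' for countable intersections over $q \in \{1/k\}_{k\in\mathbb{N}}$ (for $D^-$) and $q \in \mathbb{N}$ (for $D^+$).

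\textbf{Generic sets in $\mathcal{H}$.} For each $T \in C_{\mathrm{p}}$ with eigenbasis $\{e_j\}$, I will note that any finite linear combination $\psi = \sum_{j \le N} c_j e_j$ has a finitely supported spectral measure $\mu_\psi^T$ and therefore $D^\mp_{\mu_\psi^T}(q) = 0$ for all $q > 0$; since such $\psi$'s are dense in~$\mathcal{H}$, combining with (a) and (c) will show that
\[\mathcal{N}^-(T) := \{\psi \in \mathcal{H} \mid D^-_{\mu_\psi^T}(q) = 0 \text{ for all } q>0\}\]
is a dense $G_\delta$ in $\mathcal{H}$. For $T \in C_{\mathrm{ac}}$, I will use the spectral representation to pick $\varphi_T$ whose spectral measure has density bounded above and below on some interval, so that $D^\mp_{\mu_{\varphi_T}^T}(q) = 1$ for every $q>0$; then Theorem~\ref{stabtheorem}(ii) handles $0<q<1$, and for $q \ge 1$ a direct $G_\delta$-plus-density argument using (a) and (c) yields that
\[\mathcal{N}^+(T) := \{\psi \in \mathcal{H} \mid D^+_{\mu_\psi^T}(q) = 1 \text{ for all } q>0\}\]
is generic in $\mathcal{H}$.

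\textbf{Construction of $\mathcal{M}$ and residuality in~$X$.} By separability I pick countable dense sets $\mathcal{D}_{\mathrm{p}} \subset C_{\mathrm{p}}$ and $\mathcal{D}_{\mathrm{ac}} \subset C_{\mathrm{ac}}$ (dense in~$X$), and define
\[\mathcal{M} := \bigcap_{T \in \mathcal{D}_{\mathrm{p}}} \mathcal{N}^-(T) \;\cap\; \bigcap_{T \in \mathcal{D}_{\mathrm{ac}}} \mathcal{N}^+(T),\]
a countable intersection of generic sets, hence generic. For $\psi \in \mathcal{M}$ and $k \in \mathbb{N}$, the sets $X^-_k(\psi) := \{T \in X \mid D^-_{\mu_\psi^T}(1/k) = 0\}$ and $X^+_k(\psi) := \{T \in X \mid D^+_{\mu_\psi^T}(k) = 1\}$ are $G_\delta$ in $X$ by~(a), and by construction of~$\mathcal{M}$ they contain $\mathcal{D}_{\mathrm{p}}$ and $\mathcal{D}_{\mathrm{ac}}$ respectively, each dense in~$X$. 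So each is a dense $G_\delta$ in the Baire space~$X$; their countable intersection, which equals $X_{01}(\psi)$ by~(c), is residual in~$X$.

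\textbf{Main obstacle.} The delicate point will be ingredient~(a), the $G_\delta$ structure of the dimension level sets simultaneously in $\mathcal{H}$ and in~$X$, together with the handling of the $q$-ranges not covered directly by Theorem~\ref{stabtheorem} (namely $0<q\le 1$ for $D^-$ and $q \ge 1$ for $D^+$). The first is drawn from~\cite{ACdeOLMP} via a careful passage to the limit in the defining partition sums; the second requires the density inputs described above (finite eigenvector combinations for $D^- = 0$ and well-behaved cyclic vectors for $D^+ = 1$), since Theorem~\ref{stabtheorem} does not apply there. Once these are established, the double Baire manipulation is routine.
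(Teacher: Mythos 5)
Your overall architecture coincides with the paper's: show that for a fixed pure point $T$ the set $\mathcal{N}^-(T)$ is generic in $\mathcal{H}$ and for a fixed purely absolutely continuous $T$ the set $\mathcal{N}^+(T)$ is generic in $\mathcal{H}$, intersect over countable dense families in $C_{\mathrm{p}}$ and $C_{\mathrm{ac}}$ to obtain $\mathcal{M}$, and then run the second Baire argument in $X$ via Proposition~\ref{propgdeltabis} and monotonicity in $q$. Your pure point side (finite linear combinations of eigenvectors have finitely supported spectral measures, hence vanishing lower dimensions for every $q$) is essentially the paper's Claim~II, and the final assembly, including the reduction of ``for all $q>0$'' to $q\in\{1/k\}$ for $D^-$ and $q\in\mathbb{N}$ for $D^+$, is correct.

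The gap is on the absolutely continuous side, in the range $q>1$ --- precisely the range your own reduction needs, since the sets $X^+_k(\psi)$ with $k\in\mathbb{N}$ large are what force $D^+_{\mu_\psi^T}(q)=1$ for \emph{all} $q>0$ by downward monotonicity; knowing $D^+(q)=1$ only for $0<q<1$ gives nothing for $q>1$. Theorem~\ref{stabtheorem}(ii) is unavailable for $q>1$, and it cannot be extended there by the same mechanism: the perturbation $\psi_k=\psi+\frac1k\varphi_T$ only yields $\mu_{\psi_k}^T\geq \frac{1}{k^2}\mu_{\varphi_T}^T$ on a suitable set, and for $s>1$ domination of measures bounds the generalized dimension from \emph{above}, not below (if $\mu\geq\nu$ then $\int\mu(B(x,\epsilon))^{s-1}\,\mathrm{d}\mu\geq\int\nu(B(x,\epsilon))^{s-1}\,\mathrm{d}\nu$, which after dividing by $(s-1)\ln\epsilon<0$ gives $D^\mp_\mu(s)\leq D^\mp_\nu(s)$). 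So a single well-behaved vector $\varphi_T$, even one with $D^\mp_{\mu_{\varphi_T}^T}(q)=1$ for all $q$, does not produce a \emph{dense} set of $\psi$ with $D^+_{\mu_\psi^T}(s)=1$ for $s>1$, and your ``direct $G_\delta$-plus-density argument'' is exactly the missing step. What is needed --- and what the paper's Claim~I supplies --- is an approximation statement valid for \emph{every} $\psi$: using Proposition~\ref{scexpproposition0} and Egoroff's theorem one finds Borel sets $S_k$ with $\mu_\psi^T(S_k^c)<1/k$ on which $\mu_\psi^T(B(x,\epsilon))\leq\epsilon^{1-\sigma}$ uniformly, so that $\psi_k:=P^T(S_k)\psi\to\psi$ and $D^\mp_{\mu_{\psi_k}^T}(q)=1$ for all $q>1$. (An equivalent fix in your language: truncate the Radon--Nikodym derivative, i.e., take $\psi_n:=P^T(\{\mathrm{d}\mu_\psi^T/\mathrm{d}x\leq n\})\psi$, whose spectral measure is Lipschitz; note also that your $\varphi_T$ with density bounded above and below on an \emph{interval} need not exist in general, only on a set of positive Lebesgue measure.) With that density input in place of the single-vector one, your proof closes.
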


\begin{remark}\label{anularemark}{\rm  We  note that even for $T$ with pure point spectrum, it may occur that $D_{\mu_{\psi}^T}^-(q) > 0$ for all $0 < q < 1$ (see \cite{BGT2001,GKT} for details), and when $T$ has purely absolutely continuous spectrum, it may happen that $D_{\mu_{\psi}^T}^+(q) < 1$ for all $q > 1$ (see Example~\ref{freexample}). Therefore, such result is not necessarily expected.}
\end{remark}

\begin{corollary}[Wonderland Theorem]\label{FWT} Let~$X$ be as in Theorem~\ref{mainproposion}.  If both sets $C_{\mathrm{p}}$ and  $C_{\mathrm{ac}}$  are dense in~$X$, then the set  $C_{\mathrm{sc}}=C_{\mathrm{sc}}(X):= \{T\in X\mid T$ has purely singular continuous spectrum$\}$ is residual in~$X$.
\end{corollary}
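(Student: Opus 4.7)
The plan is to derive Corollary \ref{FWT} as a direct consequence of Theorem \ref{mainproposion} by combining the dimensional information over a countable dense family of test vectors, then translating the extreme dimensional values into the usual spectral decomposition. Since $\mathcal{H}$ is separable, the generic set $\mathcal{M}$ supplied by Theorem \ref{mainproposion} contains a countable subset $\{\psi_n\}_{n\geq 1}$ that is dense in $\mathcal{H}$. Setting $\mathcal{R} := \bigcap_{n\geq 1} X_{01}(\psi_n)$, each factor is residual in the complete metric space $X$ by Theorem \ref{mainproposion}, so by Baire's theorem $\mathcal{R}$ itself is residual. It therefore suffices to verify $\mathcal{R}\subset C_{\mathrm{sc}}$.

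Fix $T\in\mathcal{R}$ and $n\geq 1$, and write $\mu:=\mu_{\psi_n}^T$. To rule out point spectrum, I would use $D_\mu^+(q)=1$ at some $q>1$: if $\mu$ had an atom of mass $m>0$, the partition sum $S_\mu(q,\varepsilon)$ defining $D_\mu^+(q)$ would be bounded below by $m^q$ and above by $\mu(\mathbb{R})^q$ (using $x^q\leq \mu(\mathbb{R})^{q-1}x$ on $[0,\mu(\mathbb{R})]$), so $\log S_\mu$ would stay bounded while $(q-1)\log\varepsilon\to-\infty$, forcing $D_\mu^+(q)=0$ -- a contradiction. Hence $\mu$ is continuous, and $\psi_n\in\mathcal{H}_{\mathrm{c}}(T)$. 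Since $\mathcal{H}_{\mathrm{c}}(T)$ is a closed subspace and $\{\psi_n\}$ is dense in $\mathcal{H}$, we conclude $\mathcal{H}_{\mathrm{c}}(T)=\mathcal{H}$, i.e., $T$ has no eigenvalues.

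To rule out an absolutely continuous component, I would instead fix some $q\in(0,1)$ and use $D_\mu^-(q)=0$. Decomposing $\mu=\mu_{\mathrm{ac}}+\mu_{\mathrm{s}}$ and applying $(a+b)^q\geq a^q$ termwise in the partition function yields $D_\mu^-(q)\geq D_{\mu_{\mathrm{ac}}}^-(q)$; but any nontrivial AC measure with bounded support satisfies $D^-(q)=1$ on $(0,1)$ (a standard Riemann-sum estimate, recorded in Proposition \ref{GFDproposition}), so $\mu_{\mathrm{ac}}\equiv 0$. Thus $\psi_n\in\mathcal{H}_{\mathrm{s}}(T)$, and by the same closedness/density argument $T$ has no AC spectrum. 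Combined with the previous step, $T\in C_{\mathrm{sc}}$, and the residual set $\mathcal{R}$ is contained in $C_{\mathrm{sc}}$.

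The substance of the argument is already carried by Theorem \ref{mainproposion}; the only remaining work is the two dimensional implications used above (no atoms when $D_\mu^+(q)=1$ for some $q>1$; no AC component when $D_\mu^-(q)=0$ for some $q\in(0,1)$), both of which should follow immediately from the partition-function presentation of $D_\mu^\mp(q)$ collected in Proposition \ref{GFDproposition}. No category-theoretic step beyond one countable intersection is required, and the only genuine choice is the selection of $q>1$ for the atom argument versus $q\in(0,1)$ for the AC argument.
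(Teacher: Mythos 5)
Your proof is correct and follows essentially the same route as the paper: choose a countable dense family $\{\psi_n\}\subset\mathcal M$, intersect the residual sets $X_{01}(\psi_n)$, show that for $T$ in this intersection each spectral measure $\mu_{\psi_n}^T$ is purely singular continuous, and conclude using the closedness of the continuous and singular subspaces together with the density of $\{\psi_n\}$. The paper isolates the pointwise step as Lemma~\ref{sclemma}: atoms are excluded exactly as you do (this is display~(\ref{GFD0})), while the absolutely continuous component is excluded via the chain $D_\mu^-(q)\geq\dim_{\mathrm H}^+(\mu)=1$ from Proposition~\ref{HPandGFDproposition}, rather than via your decomposition $\mu=\mu_{\mathrm{ac}}+\mu_{\mathrm s}$ and monotonicity of the partition function. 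Both routes work; yours additionally requires the fact that a nontrivial absolutely continuous measure of bounded support has $D^-(q)=1$ for $q\in(0,1)$, and here you misattribute the source: that fact is \emph{not} recorded in Proposition~\ref{GFDproposition} (which only states $D^\mp=m^\mp$, monotonicity in $q$, and the bounds $0\le D^\mp\le 1$); in this paper it is obtained by combining Propositions~\ref{scexpproposition0} and~\ref{HPandGFDproposition}. With that citation repaired (or with a direct Lebesgue-differentiation argument supplied), your proof is complete and equivalent to the paper's.
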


\begin{remark}\label{remarkWTwithDims}{\rm 
(a)~The proof of Corollary~\ref{FWT} presented below is entirely based on the conclusions of Theorem~\ref{mainproposion}, that is, it is based on the existence of the residual sets~$\mathcal{M}$ and  $X_{{01}}(\psi)$, for $\psi\in\mathcal M$. It is, therefore, a  different proof from the one presented in~\cite{SimonWonderland}. (b)~Naturally, one may combine Theorem~\ref{mainproposion} and Corollary~\ref{FWT} to conclude that for each $\psi \in\mathcal M$, the set $X_{{01}}^{\mathrm{sc}}(\psi) := \{T\in X\mid T$ has purely singular continuous spectrum, $D_{\mu_{\psi}^T}^-(q) =0$ and $D_{\mu_{\psi}^T}^+(q) =1$, for all  $q>0\}$ is residual in~$X$. Indeed, it is enough to note that $X_{{01}}^{\mathrm{sc}}(\psi)=  X_{{01}}(\psi)\cap C_{\mathrm{sc}}$.}
\end{remark}

\begin{remark}\label{regularremark}{\rm Since for bounded self-adjoint operators on~$\mathcal H$ strong convergence implies strong resolvent convergence~\cite{Oliveira}, every metric space of bounded self-adjoint operators endowed with the strong operator topology is a  regular space, and by~\cite{Chan} it is also separable.}
\end{remark}

Let $\dim_{\mathrm H}^+(\mu)$ denote the upper  Hausdorff dimension of~$\mu$ (such notion is recalled in Section~\ref{secpreliminarie}). The next result, presented in \cite{Barbaroux!997}, relates this quantity to the lower generalized fractal dimensions.

\begin{proposition}\label{HPandGFDproposition} Let~$\mu$ be a finite nonnegative Borel measure on~$\mathbb{R}$ and  $0 < q < 1 < s$. Then, $D_{\mu}^-(q) \geq {\rm dim_{H}^+}(\mu) \geq  D_{\mu}^-(s)$.
\end{proposition}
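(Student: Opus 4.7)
The approach is to interpolate between the partition-function definition
\[D_\mu^-(q) \;=\; \liminf_{\varepsilon\to 0^+}\frac{\log\int \mu(B(x,\varepsilon))^{q-1}\,d\mu(x)}{(q-1)\log\varepsilon}\]
and the pointwise local lower dimension $\underline{d}_\mu(x) := \liminf_{\varepsilon\to 0^+}\log\mu(B(x,\varepsilon))/\log\varepsilon$, via the standard Young--Cutler identity (to be recalled from Section~\ref{secpreliminarie}) asserting that $\dim_{\mathrm H}^+(\mu)$ equals the $\mu$-essential supremum of $\underline{d}_\mu$. Writing $\alpha := \dim_{\mathrm H}^+(\mu)$, both inequalities then follow from pointwise bounds on $\mu(B(x,\varepsilon))^{q-1}$ combined with a uniformization step.

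\textbf{First inequality ($D_\mu^-(q)\geq\alpha$ for $0<q<1$).} Fix $\delta>0$. By the Young--Cutler identity, the set $A:=\{x:\underline{d}_\mu(x)>\alpha-\delta\}$ has positive $\mu$-measure, and for $x\in A$ and $\gamma<\alpha-\delta$ one has $\mu(B(x,\varepsilon))\leq\varepsilon^\gamma$ for every $\varepsilon$ small enough, with threshold depending on $x$. Egorov's theorem promotes this to a uniform bound on some $A'\subset A$ with $\mu(A')>0$. Since $q-1<0$, this flips to $\mu(B(x,\varepsilon))^{q-1}\geq \varepsilon^{\gamma(q-1)}$ on $A'$, whence
\[\int \mu(B(x,\varepsilon))^{q-1}\,d\mu(x) \;\geq\; \mu(A')\,\varepsilon^{\gamma(q-1)}.\]
Dividing the log by the positive quantity $(q-1)\log\varepsilon$ and taking $\liminf_{\varepsilon\to 0^+}$ produces $D_\mu^-(q)\geq\gamma$; letting $\gamma\uparrow\alpha-\delta$ and then $\delta\downarrow 0$ closes this case.

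\textbf{Second inequality ($\alpha\geq D_\mu^-(s)$ for $s>1$).} By the Young--Cutler identity, $\mu$-a.e.\ $x$ satisfies $\underline{d}_\mu(x)\leq\alpha$. Fix $\delta>0$ and, along the dyadic scale $\varepsilon_n=2^{-n}$, set
\[\widetilde B_n \;:=\; \bigl\{x : \mu(B(x,2^{-n}))\geq C\cdot 2^{-n(\alpha+\delta)}\bigr\}\]
for a suitable constant $C>0$. Sandwiching any $x$-dependent realizing sequence between consecutive dyadic scales shows that $\mu$-a.e.\ $x$ lies in $\limsup_n\widetilde B_n$; reverse Fatou then yields a subsequence $n_j$ along which $\mu(\widetilde B_{n_j})$ is bounded below by a positive constant. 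Restricting $\int\mu(B(x,2^{-n_j}))^{s-1}\,d\mu(x)$ to $\widetilde B_{n_j}$ and using $s-1>0$ delivers a lower bound of the form $C^{s-1}\mu(\widetilde B_{n_j})\,2^{-n_j(s-1)(\alpha+\delta)}$; dividing its log by the negative quantity $(s-1)\log 2^{-n_j}$, then letting $j\to\infty$ and $\delta\downarrow 0$, yields $D_\mu^-(s)\leq\alpha$.

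\textbf{Main obstacle.} The substantive step is the uniformization of the pointwise dimension statement into an estimate at a common scale $\varepsilon$ on a set of positive $\mu$-measure. In the first inequality this is the standard Egorov reduction; in the second, where one only controls a liminf (not a limsup) of local dimensions while needing a lower bound on $\mu(B(x,\varepsilon))$, one is forced into the dyadic $\limsup$/reverse-Fatou detour. If one preferred to sidestep the Young--Cutler identity, one could instead argue via the monotonicity of $q\mapsto D_\mu^-(q)$ and the known limiting values at $q\to 1^\mp$, but the route sketched above is the most self-contained.
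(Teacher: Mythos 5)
First, a remark on the comparison: the paper does not prove this proposition at all — it is imported verbatim from \cite{Barbaroux!997} — so your argument has to stand on its own. Your first inequality does: the set $A$ has positive $\mu$-measure by the definition of the essential supremum, the Egorov (or exhaustion) step legitimately produces a uniform scale threshold on a subset $A'$ of positive measure, the sign of $q-1$ flips the bound the right way, and the error term $\ln\mu(A')/\bigl((q-1)\ln\varepsilon\bigr)$ vanishes since $(q-1)\ln\varepsilon>0$ and $\to+\infty$. That half is correct.

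The second inequality has a genuine gap at the step ``reverse Fatou then yields a subsequence $n_j$ along which $\mu(\widetilde B_{n_j})$ is bounded below by a positive constant.'' The reverse Fatou lemma for a finite measure reads $\limsup_n\mu(A_n)\le\mu(\limsup_n A_n)$; it converts information about $\limsup_n\mu(A_n)$ into information about $\mu(\limsup_n A_n)$, not the other way around. Knowing only that $\mu(\limsup_n\widetilde B_n)=\mu(\mathbb{R})$ does not give $\limsup_n\mu(\widetilde B_n)>0$: a sequence of sets whose measures tend to zero can still cover $\mu$-almost every point infinitely often (think of intervals of shrinking length sweeping repeatedly across $[0,1]$ with Lebesgue measure). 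So the uniform lower bound you invoke is unjustified. The repair is Borel--Cantelli rather than Fatou: if one had $\mu(\widetilde B_n)\le 2^{-\eta n}$ for some $\eta>0$ and all large $n$, then $\sum_n\mu(\widetilde B_n)<\infty$ and hence $\mu(\limsup_n\widetilde B_n)=0$, contradicting what you proved; therefore $\limsup_n n^{-1}\ln\mu(\widetilde B_n)=0$, and one may choose $n_j$ with $n_j^{-1}\ln\mu(\widetilde B_{n_j})\to 0$. This weaker, subexponential non-decay is exactly what your final estimate needs, since the error term there is $\ln\bigl(C^{s-1}\mu(\widetilde B_{n_j})\bigr)/\bigl((s-1)\ln 2^{-n_j}\bigr)$, which tends to $0$ precisely under this condition. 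With that single substitution the rest of your computation — restriction of the integral to $\widetilde B_{n_j}$, the sign flip from dividing by $(s-1)\ln 2^{-n_j}<0$, bounding the full $\liminf$ by the dyadic subsequence, and letting $\delta\downarrow 0$ — goes through and the proof is complete.
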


\begin{lemma}\label{sclemma} Let~$T$ be a bounded self-adjoint operator on~$\mathcal{H}$ and $0 \neq \psi\in\mathcal{H}$. If there exist $0 < q' < 1 < s'$ such that $ D_{\mu_{\psi}^T}^-(q') < 1$ and  $D_{\mu_{\psi}^T}^+(s')  > 0$, then $\mu_\psi^T$ is a purely singular continuous measure. 
\end{lemma}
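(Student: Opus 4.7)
The plan is to write $\mu := \mu_\psi^T = \mu_{\mathrm{pp}} + \mu_{\mathrm{sc}} + \mu_{\mathrm{ac}}$ and then to rule out the absolutely continuous and the pure point parts separately, each one using a different hypothesis of the lemma.

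To rule out $\mu_{\mathrm{ac}}$ I would invoke Proposition~\ref{HPandGFDproposition} at the exponent $q'\in(0,1)$, obtaining
\[
\dim_{\mathrm{H}}^+(\mu) \;\le\; D_\mu^-(q') \;<\; 1.
\]
On the other hand, if $\mu_{\mathrm{ac}}\ne 0$ then any Borel set $A$ with $\mu(\mathbb{R}\setminus A)=0$ must satisfy $\mu_{\mathrm{ac}}(A)>0$ and therefore have positive Lebesgue measure, so $\dim_{\mathrm{H}}(A)=1$; the standard characterization of $\dim_{\mathrm{H}}^+(\mu)$ as the infimum of such Hausdorff dimensions then gives $\dim_{\mathrm{H}}^+(\mu)=1$, contradicting the display above. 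Hence $\mu_{\mathrm{ac}}=0$.

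To rule out $\mu_{\mathrm{pp}}$ I would use $D_\mu^+(s')>0$ with $s'>1$ and show that a single atom already forces $D_\mu^+(s')=0$. Indeed, if $x_0$ satisfies $\mu(\{x_0\})=m>0$, then for every $\varepsilon>0$,
\[
\int \mu(B_\varepsilon(x))^{s'-1}\, d\mu(x) \;\geq\; \mu(B_\varepsilon(x_0))^{s'-1}\,\mu(\{x_0\}) \;\geq\; m^{s'},
\]
while the same integral is bounded above by $\mu(\mathbb{R})^{s'}$. So its logarithm remains bounded as $\varepsilon\to 0^+$, and dividing by $(s'-1)\log\varepsilon\to -\infty$ yields $D_\mu^+(s')=0$, contrary to assumption. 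Therefore $\mu_{\mathrm{pp}}=0$ and, together with the previous step, $\mu=\mu_{\mathrm{sc}}$.

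The main obstacle I anticipate is the first step, because one needs to match the definition of $\dim_{\mathrm{H}}^+(\mu)$ adopted in Section~\ref{secpreliminarie} and in Proposition~\ref{HPandGFDproposition} with a concrete property -- namely that $\dim_{\mathrm{H}}^+(\mu)=1$ whenever $\mu$ carries a nonzero absolutely continuous component -- before the strict upper bound $<1$ becomes meaningful; the pure point step, by contrast, is purely a manipulation of the defining integral.
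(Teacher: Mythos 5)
Your proposal is correct and follows essentially the same route as the paper: an atom forces $D_{\mu}^{+}(s')=0$ by bounding the defining integral below by $\mu(\{x_0\})^{s'}$, and a nonzero absolutely continuous component forces $\dim_{\mathrm H}^{+}(\mu)=1$, which contradicts $D_{\mu}^{-}(q')<1$ via Proposition~\ref{HPandGFDproposition}. The only difference is that you justify $\dim_{\mathrm H}^{+}(\mu)=1$ through the (equivalent) characterization as an infimum of Hausdorff dimensions of full-measure sets, whereas the paper simply asserts this fact; both are fine.
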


\begin{proof} If $\mu_{\psi}^T$ has an atom, that is, if there exists $\lambda \in \mathbb{R}$ such that $\mu_{\psi}^T(\{\lambda\})>0$, then it is easy to show that for each $s>1$, $D_{\mu_{\psi}^T}^+(s)  = 0$ (see (\ref{GFD0}) ahead). On the other hand, if $\mu_{\psi}^T$ has an absolutely continuous component, then ${\rm dim_{H}^+}(\mu_{\psi}^T) = 1$ and, therefore, it follows from Proposition \ref{HPandGFDproposition} that for each $0<q<1$, $D_{\mu_{\psi}^T}^-(q) = 1$. Hence, if there exist $0<q'<1<s'$  with  $D_{\mu_{\psi}^T}^+(s') > 0$ and $D_{\mu_{\psi}^T}^-(q')< 1$, then $\mu_{\psi}^T$ is singular continuous. 
\end{proof}

\begin{proof}(Corollary~\ref{FWT}) {\rm Let $\mathcal{M}$  be as in the statement of Theorem~\ref{mainproposion} and let $\{\psi_j\}_{j \in \mathbb{Z}} \subset \mathcal{M}$ be a dense sequence in $\mathcal{H}$. It follows from Lemma~\ref{sclemma} that for each $\phi \in \mathcal{M}$ and each $S\in X_{{01}}(\phi)$, the spectral measure $\mu_\phi^S$ is purely singular continuous. Then, since the singular continuous subspace associated with each self-adjoint operator is a closed subspace of~$\mathcal H$ \cite{Oliveira}, one has  
\[
C_\mathrm{sc}\, \supset \, \cap_{j \in \mathbb{Z}} X_{{01}}(\psi_j).
\] The result is now a consequence of Theorem~\ref{mainproposion}.}
\end{proof}

The following result is a direct consequence of  Remark~\ref{remarkWTwithDims} (b) and Proposition~\ref{HPandGFDproposition}. 

\begin{corollary}\label{maincorollary}Let~$X$ be as in Theorem~\ref{mainproposion}. If both sets $C_{\mathrm{p}}$ and  $C_{\mathrm{ac}}$  are dense in~$X$,  then there exists a generic set $\mathcal{M}$  in $\mathcal{H}$ such that, for each $\psi \in \mathcal{M}$, the set $\{T\in X\mid T$ has purely singular continuous spectrum, $\dim_{\rm H}^+(\mu_{\psi}^T) =0 \}$ is residual in~$X$.
\end{corollary}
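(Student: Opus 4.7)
The plan is to derive this corollary directly from the stronger statement in Remark~\ref{remarkWTwithDims}(b), using Proposition~\ref{HPandGFDproposition} to pass from information about the lower generalized fractal dimensions to information about the upper Hausdorff dimension of the spectral measure. Since the hypotheses on $X$, $C_{\mathrm p}$ and $C_{\mathrm{ac}}$ are exactly those of Theorem~\ref{mainproposion}, I can invoke it to obtain the generic set $\mathcal{M}\subset\mathcal H$ for which the set
\[
X_{01}^{\mathrm{sc}}(\psi) = \{T\in X \mid T \text{ has purely singular continuous spectrum},\ D_{\mu_{\psi}^T}^-(q) = 0 \text{ and } D_{\mu_{\psi}^T}^+(q) = 1,\ \forall q>0\}
\]
is residual in $X$ for every $\psi\in\mathcal M$.

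The remaining step is simply an inclusion argument. Fix $\psi\in\mathcal M$ and $T\in X_{01}^{\mathrm{sc}}(\psi)$. By definition, $D_{\mu_\psi^T}^-(s)=0$ for every $s>1$; I would then pick any such $s$ and apply Proposition~\ref{HPandGFDproposition} to the measure $\mu_\psi^T$, which gives
\[
0 \leq \dim_{\mathrm H}^+(\mu_\psi^T) \leq D_{\mu_\psi^T}^-(s) = 0,
\]
hence $\dim_{\mathrm H}^+(\mu_\psi^T)=0$. Since $T$ also has purely singular continuous spectrum by the definition of $X_{01}^{\mathrm{sc}}(\psi)$, this shows
\[
X_{01}^{\mathrm{sc}}(\psi) \subset \{T\in X \mid T \text{ has purely singular continuous spectrum and } \dim_{\mathrm H}^+(\mu_\psi^T)=0\}.
\]

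To conclude, I would observe that a superset of a residual set in a complete metric space is again residual (it contains the same dense $G_\delta$), so the right-hand side above is residual in $X$ for every $\psi\in\mathcal M$, as required. There is no real obstacle here: the entire content is in Theorem~\ref{mainproposion} and the simple inequality from Proposition~\ref{HPandGFDproposition}; the only small point to note is the direction of the inequality in Proposition~\ref{HPandGFDproposition}, namely that $D_\mu^-(s)$ with $s>1$ (rather than $q<1$) is what dominates $\dim_{\mathrm H}^+(\mu)$ from above, which is exactly what is needed to force $\dim_{\mathrm H}^+=0$ from the vanishing of lower generalized fractal dimensions for $s>1$ that is guaranteed by $X_{01}^{\mathrm{sc}}(\psi)$.
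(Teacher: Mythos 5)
Your overall strategy is exactly the paper's: the text states that Corollary~\ref{maincorollary} is a direct consequence of Remark~\ref{remarkWTwithDims}(b) together with Proposition~\ref{HPandGFDproposition}, and your inclusion
\[
X_{01}^{\mathrm{sc}}(\psi) \subset \{T\in X \mid T \text{ has purely singular continuous spectrum and } \dim_{\mathrm H}^+(\mu_\psi^T)=0\}
\]
followed by the observation that a superset of a residual set is residual is precisely the intended argument. However, you have the inequality of Proposition~\ref{HPandGFDproposition} backwards at the one point where it matters. The proposition asserts, for $0<q<1<s$, that
\[
D_{\mu}^-(q) \;\geq\; \dim_{\mathrm H}^+(\mu) \;\geq\; D_{\mu}^-(s),
\]
so it is $D_\mu^-(q)$ with $0<q<1$ that dominates $\dim_{\mathrm H}^+(\mu)$ from above, while $D_\mu^-(s)$ with $s>1$ is only a \emph{lower} bound for $\dim_{\mathrm H}^+(\mu)$. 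Your chain $0\leq \dim_{\mathrm H}^+(\mu_\psi^T)\leq D_{\mu_\psi^T}^-(s)=0$ with $s>1$ is therefore not what the proposition gives; from $D_{\mu_\psi^T}^-(s)=0$ for $s>1$ you would only obtain the vacuous statement $\dim_{\mathrm H}^+(\mu_\psi^T)\geq 0$. The closing sentence of your proposal, where you emphasize that ``$D_\mu^-(s)$ with $s>1$ (rather than $q<1$) is what dominates $\dim_{\mathrm H}^+(\mu)$ from above,'' is exactly the wrong way around.

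The error is immediately repairable because membership in $X_{01}^{\mathrm{sc}}(\psi)$ gives $D_{\mu_\psi^T}^-(q)=0$ for \emph{all} $q>0$, in particular for some (any) $0<q<1$; applying the correct half of Proposition~\ref{HPandGFDproposition} then yields $\dim_{\mathrm H}^+(\mu_\psi^T)\leq D_{\mu_\psi^T}^-(q)=0$. With that one-line correction your proof is complete and coincides with the paper's.
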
 

Corollary~\ref{maincorollary}  is also a consequence of the results recently presented in~\cite{CarvalhodeOliveiraHP}. However, the results and methods of this paper are different from those of~\cite{CarvalhodeOliveiraHP}.  Namely, the main technical ingredients in the present paper involve some decompositions of spectral measures with respect to the fractal generalized dimensions, whereas in~\cite{CarvalhodeOliveiraHP} the main idea is to directly show that for each $0\neq\psi\in\mathcal{H}$, $\{T\in X\mid \dim_{\rm H}^+(\mu_{\psi}^T) =0\}$ is a $G_\delta$ set in~$X$. 

There are in the literature (see, for instance, \cite{CarvalhoCorrelation,CarvalhodeOliveiraPAMS,DamanikDDP,Oliveira,SimonWonderland}) numerous important examples for which our general results  apply. As an illustration, we present the following application. 

\begin{example}\label{exampAnQuas}{\rm Consider the class of Schr\"odinger operators with analytic quasiperiodic potentials, acting on $\ell^2(\mathbb{Z})$,  generated by a nonconstant real analytic function $v \in {\mathrm{C}}^\omega(\mathbb{T}, \mathbb{R})$, that is, 
\[(H_{\lambda,\alpha,\theta}^vu)_n := u_{n+1} + u_{n-1} + \lambda v(\theta + \alpha n)u_n,\]
where $0 \neq \lambda \in \mathbb{R}$, $\alpha \in \mathbb{T}$ is the frequency and $\theta \in \mathbb{T}$ is the phase (an important example is given by the almost Mathieu operator, for which $v(x) = 2 \cos(2 \pi x)$; see Example~\ref{Mathieu}).

For each nonconstant $v \in {\mathrm{C}}^\omega(\mathbb{T}, \mathbb{R})$,  $0 \neq \lambda \in \mathbb{R}$ and $\theta \in \mathbb{T}$,  consider the space of self-adjoint operators
\[X_{\lambda,\theta}^v : = \{ H_{\lambda,\alpha,\theta}^v \mid \,  \alpha \in \mathbb{T}\}\]
endowed with the following metric  (whose induced topology is equivalent to the strong operator topology) 
\[d(H_{\lambda,\alpha,\theta}^v,H_{\lambda,\alpha',\theta}^v) := \biggr|\sin\biggr(\frac{\alpha-\alpha'}{2}\biggr)\biggr|.\]
Since for  $\lambda \in \mathbb{R}$,  $\theta \in \mathbb{T}$ and  $\alpha \in \mathbb{Q}/\mathbb{Z}$, the operator $H_{\lambda,\alpha,\theta}^v$ is purely absolutely continuous, and there exists $\lambda_0(v) > 0$  \cite{Bourgain,Bourgain2} (for the almost Mathieu operator, one can take $\lambda_0 = 1$)  so that, for every $\lambda > \lambda_0(v) > 0$, every $\theta \in \mathbb{T}$ and for all $\alpha$ outside a set of zero Lebesgue measure, $H_{\lambda,\alpha,\theta}^v$ is pure point, it follows from Theorem~\ref{mainproposion} (see also Remark~\ref{remarkWTwithDims}) that there exists a generic set $\mathcal{M}(v)$ in $\ell^2(\mathbb{Z})$ such that, for each $\psi \in \mathcal{M}(v)$, the set $\{ H = H_{\lambda,\alpha,\theta}^v \in X_{\lambda,\theta}^v\mid H$ has purely singular continuous spectrum with $D_{\mu_{\psi}^{H}}^-(q) =0$ and $D_{\mu_{\psi}^{H}}^+(q) =1$, for all $q >0\}$ is residual in~$X_{\lambda,\theta}^v$.} 
\end{example}

\begin{remark}{\rm We note that under the above assumptions, Bourgain and Goldstein has shown in \cite{Bourgain2} that for every $\alpha$ outside a set of zero Lebesgue measure, $H_{\lambda,\alpha,\theta}^v$ has dynamical localization; therefore, the conclusions regarding the lower dimensions in Example~\ref{exampAnQuas} follow from Theorem~4.3 in \cite{CarvalhodeOliveiraPAMS}.}
\end{remark}


\subsubsection{Remarks on dynamical consequences}\label{subsecdyn}

Here, we explore some dynamical consequences of the general above results. We recall that if~$T$ is a bounded self-adjoint operator acting on~$\mathcal H$,  then ${\mathbb{R}} \ni t \mapsto e^{-itT}$ is a one-parameter strongly continuous unitary evolution group and, for each $\psi\in\mathcal{H}$, $(e^{-itT}\psi)_{t \in \mathbb{R}}$ is the unique wave packet solution to the Schr\"odinger equation
\begin{SE}\label{SE}
\begin{cases} \partial_t \psi = -iT\psi, \quad t \in {\mathbb{R}}, \\ \psi(0) = \psi\in\mathcal{H}.  \end{cases} \end{SE}

Next, we list two quantities usually considered to probe the large time behaviour of the dynamics $e^{-itT}\psi$.  The first one is the so-called (time-average) {\em quantum return probability} $\langle \gamma_\psi^T \rangle (t)$, which gives the (time-average) probability of finding the particle at time $t>0$ in its
initial state $\psi$:
\begin{equation*}
\langle \gamma_\psi^T \rangle (t) :=  \frac{1}{t}\int_0^t |\langle \psi, e^{-isT} \psi \rangle|^2 \, {\mathrm d}s;         
\end{equation*}
its lower and upper decaying exponents are given, respectively, by \cite{Barbaroux1,holsch} 
\begin{equation}\label{prob}
\liminf_{t \to \infty} \frac{\ln \langle \gamma_\psi^T \rangle (t) }{\ln t} = -D_{\mu_\psi^T}^+(2), \qquad 
\limsup_{t \to \infty}  \frac{\ln  \langle \gamma_\psi^T \rangle (t)}{\ln t} = -D_{\mu_\psi^T}^-(2).    
\end{equation}

In order to probe dynamical (de)localization associated with an initial state~$\psi$ with respect to a general orthonormal basis~$\mathfrak B =\{\eta_j\}$ of~$\mathcal H$, one may quantify the ``travel to large dimensions~$j$'' by considering the time evolution of the (time-average) {\em $p$-moments} of~$\psi$, $p>0$, that is,
\[r_{p,\mathfrak B}^{\psi,T}(t):=\biggl( \frac1t \int_0^t \sum_{j} |n|^p |\langle \eta_j,e^{-isT}\psi\rangle|^2\;\mathrm{d}s\biggl)^{\frac{1}{p}}. \]
If one thinks of a polynomial growth $r_{p,\mathfrak B}^{\psi,T}(t)\sim t^{\beta(p)}$, then the {\em lower} and {\em upper $p$-moment growth exponents} are then naturally introduced, respectively, by
\begin{equation*}\beta_{\psi,T}^-(p,\mathfrak B):= \liminf_{t\to\infty}\, \frac{\ln r_{p,\mathfrak B}^{\psi,T}(t)}{\ln t},\qquad \beta_{\psi,T}^+(p,\mathfrak B):= \limsup_{t\to\infty}\, \frac{\ln r_{p,\mathfrak B}^{\psi,T}(t)}{\ln t}.
\end{equation*}

The following inequality,  due to  Barbaroux, Germinet and Tcheremchantsev~\cite{BGT2001}, and independently obtained by Guarneri and Schultz-Baldes~\cite{GuarSB1999b}, 
\begin{equation}\label{BGTinequality}\beta_{\psi,T}^{\mp}(p,\mathfrak B) \geq    D_{\displaystyle\mu_{\psi}^T}^{\mp}\biggr(\frac{1}{1+p}\biggr),
\end{equation}
holds for all orthonormal bases and all $p>0$. Such notions are particularly interesting when $T$ is a Schr\"odinger operator acting in $\ell^2(\mathbb{Z}^\nu)$, $\nu \in \mathbb{N}$, $\{\eta_j\}$ is a basis of $\ell^2(\mathbb{Z}^\nu)$ and $\psi = f(T)\eta_0$, with $f \in C_0^\infty({\mathbb{R}})$, so displaying some locality condition \cite{DamanikDDP2,Germinetmoments}.

The corresponding dynamical consequences of Theorem~\ref{mainproposion} come from (\ref{prob}) and  (\ref{BGTinequality}); namely, the typical dynamical situation is characterized by the fact that the decay rates of the {\em quantum return probability} assume their extreme values, and by occurrence of {\em weak dynamical delocalization}:
for a typical $T \in X$, for every $\psi \in \mathcal{M}$ and for all $p>0$, $\beta_{\psi,T}^+(p,\mathfrak B) \geq 1$ and so,  
\[r_{p,\mathfrak B}^{\psi,T}(t_j)\sim\; t_j^{\beta_{\psi}^{+}(p,\mathfrak B)}\]
for a sequence of instants of time $t_j\to\infty$.  The term {\em weak} is due to the possibility of $\beta_{\psi,T}^-(p,\mathfrak B)=0$, for all~$p>0$. 

\begin{remark}{\rm Given  $T \in X_{{01}}(\psi)$ and a fixed orthonormal basis ${\mathfrak B}$ of $\mathcal{H}$, 
\[G({\mathfrak B}) = \{\phi \mid \, r_{p,{\mathfrak {B}}}^{\phi,T}(t) \equiv \infty \, \, \text{for  all} \, \, p>0 \} \]
is a dense $G_\delta$  set in $\mathcal{H}$ (see Proposition A.2 in \cite{ACdeOLMP}). In this case, ${\mathcal M} \cap G(\mathfrak B)$ is always a dense $G_\delta$  set in $\mathcal{H}$ as well. To get finite moments, usually it is necessary some  ``energy localization condition'' on the vector (with respect to the basis $\mathfrak B$); see Section 2.7 in  \cite{DamanikDDP2}. For instance, for each $\psi \in {\mathcal M}$, if $T \in X_{{01}}(\psi)$ is a discrete Schr\"odinger operator, for each orthonormal basis ${\mathfrak B} = \{\eta_j\}$ such that $\eta_1 = \Vert\psi\Vert^{-1}\psi$, one has $\beta_{\psi}^+(p,{\mathfrak B}) \leq 1$ for all $p>0$; see \cite{Germinetmoments,DamanikDDP2} for details. Hence,  note that ${\mathcal M} \setminus G({\mathfrak B}) \not= \emptyset$.}
\end{remark}

\begin{remark}\label{mainremark2}{\rm We note that a natural strategy to prove Theorem~\ref{mainproposion} consists in showing that there exist a dense subset of $T \in X$ such that $\mu_\psi^T$ is 1-H\"older continuous (since in this case, $D_{\mu_{\psi}^T}^-(q) =1$ for all $q>0$), and a dense subset of operators in~$X$ with dynamical localization, that is, satisfying for each $\psi\in\mathcal{H}$ and each $p>0$, $\displaystyle\limsup_{t \to \infty} r_{p,\mathfrak B}^{\psi,T}(t) < +\infty$ (since in this case, by (\ref{BGTinequality}), $D_{\mu_{\psi}^T}^+(q) =0$ for all $q >0$). However, it is well known that there are particular families of Schr\"odinger operators with absolutely continuous spectrum and vectors $\psi \in \ell^2(\mathbb{Z}^\nu)$ satisfying some locality condition (for instance, $\psi \in \ell^1(\mathbb{Z}^\nu)$) such that $\mu_\psi^T$ is at most $1/2$-H\"older continuous (see \cite{AvilaUaH,DamanikIDS} for additional comments). On the other hand, in order to show dynamical localization (for instance, for Schr\"odinger operators), the usual techniques involve the notion known as SULE \cite{deOliveiraPigossi,delRioSC2,StolzIntr}, which also implies spectral localization \cite{delRioSC2} (we note that localization also depends on some locality condition on $\psi$ \cite{DamanikDDP2,Germinetmoments}). Thus, although natural, this strategy does not seem to be suitable for the rather general setting of this work.}
\end{remark}

\begin{remark}{\rm {It is worth underlying that by combining some results of \cite{CarvalhoCorrelation,CarvalhodeOliveiraPAMS}, one gets Theorem~\ref{mainproposion} for the particular space of one-dimensional Jacobi matrices (with a necessarily nontrivial restriction of the spectrum to obtain $1$-H\"older continuity) endowed with the topology of pointwise convergence. However, the strategy followed in  \cite{CarvalhoCorrelation,CarvalhodeOliveiraPAMS} does not seem to be adequate to prove this theorem in such generality.}}
\end{remark}


\section{Dimensions of measures: a short account}
\label{secpreliminarie} 


\subsection{Hausdorff dimension}\label{HPM1}

\DEFI\label{locexpdefinition} Let $\mu$ be a finite nonnegative Borel measure on $\mathbb{R}$. The pointwise lower scaling exponent of~$\mu$  at $x \in \mathbb{R}$ is defined as  
\[ d_\mu^-(x) := \liminf_{\epsilon \downarrow 0} \frac{\ln \mu (B(x,\epsilon))}{\ln \epsilon}\]
if, for all $\epsilon>0$,  $\mu(B(x;\epsilon))> 0$; otherwise, one sets $d_\mu^{-}(x) := \infty$.
\DEFF

\begin{definition}\label{defHPD}{\rm The upper Hausdorff dimension of~$\mu$  is defined as}
\[{\rm dim}^+_{{\rm H}} (\mu)  := \mu{\rm {\text -}}\esssup d_\mu^-.\]
\end{definition}

The next result presents a dimensional property of measures that are absolutely continuous with respect to the Lebesgue measure (see  \cite{Falconer,Guarneri1} for details).

\begin{proposition}\label{scexpproposition0} Let $\mu$ be a nonnegative Borel measure on $\mathbb{R}$ which is absolutely continuous with respect to the Lebesgue measure. Then, $\mu$-$\essinf d_\mu^- =1$. 
\end{proposition}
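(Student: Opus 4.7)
The plan is to reduce the computation of $d_\mu^-(x)$ to a direct application of the Lebesgue differentiation theorem at Lebesgue points of the Radon--Nikodym density. Write $f := d\mu/dx \in \LL^1(\mathbb R)$, $f\ge 0$, and let $E$ be the set of Lebesgue points $x$ of $f$ at which $f(x)>0$. Since the set of non-Lebesgue points is Lebesgue-null and $\mu\ll$ Lebesgue, this set is also $\mu$-null; moreover $\mu$ is concentrated on $\{f>0\}$, so $\mu(\mathbb R\setminus E)=0$. It therefore suffices to verify $d_\mu^-(x)=1$ for every $x\in E$.

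For fixed $x\in E$, the Lebesgue differentiation theorem gives
\[
\mu(B(x,\epsilon)) \;=\; \int_{B(x,\epsilon)} f(y)\,dy \;=\; 2\epsilon\, f(x)\bigl(1+o(1)\bigr) \qquad (\epsilon\downarrow 0).
\]
In particular $\mu(B(x,\epsilon))>0$ for all sufficiently small $\epsilon$, so the first clause of Definition~\ref{locexpdefinition} applies. Taking logarithms and dividing by $\ln\epsilon$ (which tends to $-\infty$),
\[
\frac{\ln\mu(B(x,\epsilon))}{\ln\epsilon} \;=\; 1 \;+\; \frac{\ln(2f(x)) + \ln(1+o(1))}{\ln\epsilon} \;\longrightarrow\; 1.
\]
Thus the $\liminf$ defining $d_\mu^-(x)$ is actually a genuine limit equal to $1$, hence $d_\mu^-\equiv 1$ on $E$, and so $d_\mu^-=1$ $\mu$-almost everywhere.

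Since the essential infimum of a function that is $\mu$-a.e.\ equal to $1$ is $1$, this yields $\mu$-$\essinf d_\mu^-=1$, as claimed. There is no serious obstacle; the only substantive input is the Lebesgue differentiation theorem, together with the elementary observation that the error term $\ln(2f(x))/\ln\epsilon$ vanishes in the limit precisely because $f(x)$ is a strictly positive finite constant at Lebesgue points in the support of $\mu$. The only point requiring a moment of care is handling the ``otherwise'' clause in Definition~\ref{locexpdefinition}, which is why one restricts attention to $E\subset\{f>0\}$ to guarantee $\mu(B(x,\epsilon))>0$ for all small $\epsilon>0$.
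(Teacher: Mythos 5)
Your argument is correct: the paper itself states Proposition~\ref{scexpproposition0} without proof (citing \cite{Falconer,Guarneri1}), and your reduction to the Lebesgue differentiation theorem at Lebesgue points of $f=d\mu/dx$ with $f(x)>0$ is exactly the standard route, with the ``otherwise'' clause of Definition~\ref{locexpdefinition} correctly handled since $\mu(B(x,\epsilon))>0$ for small $\epsilon$ forces positivity for all $\epsilon$ by monotonicity. Note that you in fact prove the stronger statement that $d_\mu^-(x)=1$ for $\mu$-a.e.\ $x$, which is precisely the form in which the proposition is invoked later (in the proof of Claim~I and, via $\dim^+_{\mathrm H}(\mu)=1$, in Lemma~\ref{sclemma}).
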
 


\subsection{Generalized dimensions}

The study of fractal dimensions of spectral measures in the context of quantum mechanics appeared as an attempt to answer the following question (posed in~\cite{Kraut}): ``What determines the spreading of a wave packet?" For a broader discussion, we highlight also the works \cite{BGT2001,Combes,GKT,Guarneri1,GuarSB1999b,Last}.

\begin{definition}\label{GFDdefinition}{\rm Let $\mu$ be a finite positive Borel measure on $\mathbb{R}$. The lower and upper $q$-generalized fractal dimensions, $q>0$, $q \neq 1$, of~$\mu$  are defined, respectively, as  
\[D_\mu^-(q) := \liminf_{\epsilon \downarrow 0} \frac{\ln \biggr[\int \mu (B(x,\epsilon))^{q-1} {\mathrm d}\mu(x)\biggr]}{(q-1) \ln \epsilon} \quad{\rm  and }\quad D_\mu^+(q) := \limsup_{\epsilon \downarrow 0} \frac{\ln \biggr[\int \mu (B(x,\epsilon))^{q-1} {\mathrm d}\mu(x)\biggr]}{(q-1)\ln \epsilon},
\]
with the integrals taken over the support of $\mu$. The lower and upper $1$-generalized fractal dimensions of~$\mu$  are defined, respectively, as  
\[D_\mu^-(1) := \liminf_{\epsilon \downarrow 0} \frac{\int\ln \mu (B(x,\epsilon)) {\mathrm d}\mu(x)}{\ln \epsilon} \quad{\rm  and }\quad D_\mu^+(1) := \limsup_{\epsilon \downarrow 0} \frac{\int\ln \mu (B(x,\epsilon)) {\mathrm d}\mu(x)}{\ln \epsilon};\]
again, the integrals are taken over the support of $\mu$.}
\end{definition}

Other important quantities related to the $q$-generalized fractal dimensions, $q>0$, $q \neq 1$, are the so-called mean $q$-dimensions. 

\begin{definition}\label{defMeanDim}{\rm Let $\mu$ be a finite positive Borel measure on $\mathbb{R}$ and~$q>0$. The lower and upper mean $q$-dimensions of~$\mu$ are defined, respectively, as}  
\[m_\mu^-(q) := \liminf_{\epsilon \downarrow 0} \frac{\ln [\epsilon^{-1} \int \mu (B(x,\epsilon))^q \, {\mathrm d}x]}{(q-1) \ln \epsilon} \quad {\rm  and } \quad m_\mu^+(q) := \limsup_{\epsilon \downarrow 0} \frac{\ln [\epsilon^{-1} \int \mu (B(x,\epsilon))^q \, {\mathrm d}x]}{(q-1)\ln \epsilon}.\]
\end{definition}

\begin{proposition}[Theorem 2.1 and Propositions 3.1 and 3.3 in \cite{Barbaroux!997}]\label{GFDproposition} Let $\mu$ be as before. Then,
\begin{enumerate} 
\item For every $q>0$, $q \not = 1$, $D_\mu^{\mp}(q) = m_\mu^{\mp}(q)$.
\item $D_\mu^-(q)$ and $D_\mu^+(q)$ are nonincreasing functions of $q >0$.
\item If~$\mu$ has bounded support, then for all $q>0$, $0 \leq D_\mu^-(q) \leq D_\mu^+(q) \leq 1$.
\end{enumerate}
\end{proposition}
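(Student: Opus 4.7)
The plan is to prove all three items of Proposition~\ref{GFDproposition} by using the mean $q$-dimensions as a computational workhorse; item (i) then feeds into (iii), while (ii) is a free-standing Jensen argument.

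For item (i), I discretize the estimate by partitioning $\mathbb{R}$ into intervals $I_k = [k\epsilon,(k+1)\epsilon)$ and setting $p_k = \mu(I_k)$. Since $I_k \subset B(x,\epsilon) \subset I_{k-1}\cup I_k\cup I_{k+1}$ for $x \in I_k$, one has the sandwich $p_k \le \mu(B(x,\epsilon)) \le p_{k-1}+p_k+p_{k+1}$. Combining this with the elementary comparison $(a+b+c)^s \le 3^{\max(s,1)}(a^s+b^s+c^s)$ (and the reverse for concave powers when $s\in(0,1)$), I would show that both quantities
\[
\int \mu(B(x,\epsilon))^{q-1}\,{\mathrm d}\mu(x) \quad \text{and} \quad \epsilon^{-1}\int \mu(B(x,\epsilon))^q\,{\mathrm d}x
\]
are sandwiched between $c_q^{-1}\sum_k p_k^q$ and $c_q\sum_k p_k^q$ for a constant $c_q$ depending only on $q$. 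Taking logs, dividing by $(q-1)\ln\epsilon$, and letting $\epsilon \downarrow 0$ absorbs the multiplicative constants, yielding $D_\mu^{\mp}(q)=m_\mu^{\mp}(q)$.

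For item (ii), fix $q_1<q_2$ with $q_1,q_2 \neq 1$. Apply Jensen's inequality to the probability measure $\mathrm{d}\nu = \|\mu\|^{-1}\mathrm{d}\mu$, with $f(x)=\mu(B(x,\epsilon))^{q_1-1}$ and $\phi(t) = t^{(q_2-1)/(q_1-1)}$. The exponent $(q_2-1)/(q_1-1)$ exceeds $1$ when $q_1$ and $q_2$ lie on the same side of $1$, so $\phi$ is convex there; Jensen then gives
\[
\Bigl(\textstyle\int \mu(B(x,\epsilon))^{q_1-1}\,\mathrm{d}\nu\Bigr)^{(q_2-1)/(q_1-1)} \le \int \mu(B(x,\epsilon))^{q_2-1}\,\mathrm{d}\nu.
\]
Taking logs and dividing by $\ln\epsilon$ (carefully tracking that both numerator denominators and $(q_i-1)$ may change sign) yields $D_\mu^{\mp}(q_1) \ge D_\mu^{\mp}(q_2)$. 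For the mixed case $q_1<1<q_2$, I interpolate through an auxiliary value $q\in(q_1,1)\cup(1,q_2)$ and chain the monotonicity already obtained.

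For item (iii), I pass to $m_\mu^{\mp}$ via item (i). The lower bound $D_\mu^-(q)\ge 0$ is immediate: $\mu(B(x,\epsilon))^{q-1}\le \|\mu\|^{q-1}$ for $q\ge 1$ (and the reverse power bound for $q<1$) keeps the numerator $O(1)$ while the denominator $|\ln\epsilon|$ diverges. For the upper bound $D_\mu^+(q)\le 1$, I exploit boundedness of $\supp\mu\subset[-L,L]$: Fubini gives $\int\mu(B(x,\epsilon))\,\mathrm{d}x = 2\epsilon\|\mu\|$, and Hölder's inequality with the Lebesgue-support bound $2L+2\epsilon$ yields, for $q>1$,
\[
(2\epsilon\|\mu\|)^q \le \Bigl(\textstyle\int \mu(B(x,\epsilon))^q\,\mathrm{d}x\Bigr)(2L+2\epsilon)^{q-1},
\]
so $\epsilon^{-1}\int\mu(B(x,\epsilon))^q\,\mathrm{d}x \gtrsim \epsilon^{q-1}$; the log is $(q-1)\ln\epsilon + O(1)$, and the ratio has limsup at most $1$. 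The case $0<q<1$ is symmetric, with Hölder run in the reverse direction. The main obstacle I anticipate is the constant-bookkeeping in step (i), particularly tracking how the exponent $q-1$ changes sign as $q$ crosses $1$ so that the same sandwich argument produces the right asymptotic equality on both sides of $q=1$.
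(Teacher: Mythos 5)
The paper itself offers no proof of Proposition~\ref{GFDproposition}; it is imported verbatim from \cite{Barbaroux!997} (Theorem~2.1 and Propositions~3.1 and~3.3 there), so there is no in-paper argument to compare against. Your overall strategy --- discretization onto an $\epsilon$-grid for item (i), Jensen/power-mean for item (ii), Fubini plus H\"older on the bounded support for item (iii) --- is essentially the route taken in that reference. Items (i) and (iii) are sound in outline: the inclusion $I_k\subset B(x,\epsilon)\subset I_{k-1}\cup I_k\cup I_{k+1}$ is correct, as are the Fubini identity $\int\mu(B(x,\epsilon))\,\mathrm{d}x=2\epsilon\|\mu\|$ and the two-sided H\"older argument on a set of Lebesgue measure at most $2L+2\epsilon$.

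There are, however, two concrete problems in item (ii) and one deferred difficulty in item (i). First, the claim that $(q_2-1)/(q_1-1)>1$ whenever $q_1<q_2$ lie on the same side of $1$ is false for $q_1<q_2<1$: there both numerator and denominator are negative, the ratio lies in $(0,1)$, so $\phi(t)=t^{(q_2-1)/(q_1-1)}$ is concave and your displayed Jensen inequality holds with the opposite sign. The conclusion survives --- dividing by $q_2-1<0$ and then by $\ln\epsilon<0$ flips the inequality twice and restores $D_\mu^{\mp}(q_2)\le D_\mu^{\mp}(q_1)$ --- but the step as written is wrong. Second, and more seriously, handling the mixed case $q_1<1<q_2$ by ``chaining through an auxiliary value in $(q_1,1)\cup(1,q_2)$'' does not close the gap: monotonicity on $(0,1)$ and on $(1,\infty)$ separately never compares a $q<1$ with a $q>1$. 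The fix is to run the same Jensen argument directly with $p_i=q_i-1$ and $p_1<0<p_2$, for which $p_2/p_1<0$ and $t\mapsto t^{p_2/p_1}$ is convex on $(0,\infty)$; equivalently, invoke the Lyapunov/power-mean inequality, which says $p\mapsto\bigl(\int\mu(B(x,\epsilon))^{p}\,\mathrm{d}\nu\bigr)^{1/p}$ is nondecreasing on $\mathbb{R}\setminus\{0\}$, set $p=q-1$, and divide once by $\ln\epsilon<0$ to get all cases at once. Finally, in item (i) the lower sandwich bound for $0<q<1$, namely $\int\mu(B(x,\epsilon))^{q-1}\,\mathrm{d}\mu(x)\ge c_q\sum_k p_k^q$, does not follow from the elementary three-term power comparison alone: termwise, $p_k(p_{k-1}+p_k+p_{k+1})^{q-1}$ can be far smaller than $p_k^q$ when $p_k$ has a much larger neighbour. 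The cited reference handles this by comparing grid sums at nearby scales (e.g.\ $\epsilon$ versus $3\epsilon$), which is harmless for the dimensions since $\ln(3\epsilon)/\ln\epsilon\to1$, but it is genuine work rather than the constant bookkeeping you anticipate.
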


The next results play a fundamental role in the proof of Theorem~\ref{mainproposion}. 

\begin{proposition}[Proposition 3.1 in~\cite{ACdeOLMP}] \label{propgdelta}  Let~$T$ be a bounded self-adjoint operator on~$\mathcal{H}$ and   $q > 0$, $q \neq 1$. Then, for every $\Gamma \geq 0$,
\begin{enumerate}
\item $\{\psi \in {\mathcal{H}} \mid  D_{\mu_\psi^T}^-(q) \leq  \Gamma \}$ is a $G_\delta$ set in $\mathcal{H}$,
\item $\{\psi \in {\mathcal{H}} \mid D_{\mu_\psi^T}^+(q) \geq  \Gamma \}$ is a $G_\delta$ set in $\mathcal{H}$. 
\end{enumerate}
\end{proposition}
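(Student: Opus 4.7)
The overall plan is to use Proposition~\ref{GFDproposition}(1) to replace the generalized fractal dimensions $D_{\mu_\psi^T}^{\mp}(q)$ (for $q\neq 1$) by the mean $q$-dimensions $m_{\mu_\psi^T}^{\mp}(q)$, whose integrands are integrated against Lebesgue measure rather than the state-dependent measure $\mu_\psi^T$. This reduction is the key simplification, because for each fixed $\epsilon>0$ the quantity
\[
F_\psi(\epsilon) := \epsilon^{-1}\int \mu_\psi^T(B(x,\epsilon))^{q}\,{\rm d}x
\]
can be shown to depend continuously on $\psi\in\mathcal H$, and then the sets in (1) and (2) can be written as countable intersections of open sets.

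My first step would be to prove continuity of $\psi\mapsto F_\psi(\epsilon)$ for each fixed $\epsilon>0$. For each $x$, the quadratic form $\psi\mapsto\mu_\psi^T(B(x,\epsilon))=\langle \psi,P^T(B(x,\epsilon))\psi\rangle$ is jointly continuous in $\psi$. Moreover, since $T$ is bounded and $\sigma(T)\subset[-\|T\|,\|T\|]$, the integrand vanishes for $|x|>\|T\|+\epsilon$, so the integral is effectively over a bounded interval; if $\psi_n\to\psi$, then $\mu_{\psi_n}^T(B(x,\epsilon))^{q}$ is dominated uniformly by a constant on this bounded interval (since $\|\psi_n\|$ is bounded), so the dominated convergence theorem gives $F_{\psi_n}(\epsilon)\to F_\psi(\epsilon)$. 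For $\psi\neq 0$, a Fubini computation yields $\int \mu_\psi^T(B(x,\epsilon))\,{\rm d}x=2\epsilon\|\psi\|^2>0$, so $F_\psi(\epsilon)>0$ and $\ln F_\psi(\epsilon)$ is a continuous function of $\psi$ on $\mathcal H\setminus\{0\}$.

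Next I would express the level sets as $G_\delta$ sets. Setting $g_\psi(\epsilon):=\ln F_\psi(\epsilon)/[(q-1)\ln\epsilon]$, so that $D_{\mu_\psi^T}^-(q)=\liminf_{\epsilon\downarrow 0}g_\psi(\epsilon)$ and $D_{\mu_\psi^T}^+(q)=\limsup_{\epsilon\downarrow 0}g_\psi(\epsilon)$, the standard characterizations of liminf/limsup give
\[
\bigl\{\psi\mid D_{\mu_\psi^T}^-(q)\leq\Gamma\bigr\}=\bigcap_{k,N\geq 1}\ \bigcup_{\epsilon\in(0,1/N)}\bigl\{\psi\mid g_\psi(\epsilon)<\Gamma+1/k\bigr\},
\]
\[
\bigl\{\psi\mid D_{\mu_\psi^T}^+(q)\geq\Gamma\bigr\}=\bigcap_{k,N\geq 1}\ \bigcup_{\epsilon\in(0,1/N)}\bigl\{\psi\mid g_\psi(\epsilon)>\Gamma-1/k\bigr\}.
\]
A strict inequality on $g_\psi(\epsilon)$ is, for fixed $\epsilon$, equivalent to a strict inequality on $F_\psi(\epsilon)$ (with the direction depending on the sign of $(q-1)\ln\epsilon$), so each set in the inner unions is open by the previous step. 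The inner unions, being unions of open sets (indexed by $\epsilon$), are open in $\mathcal H$; the outer intersections are countable, so the whole expression is a $G_\delta$ set.

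The main technical obstacle I anticipate is the boundary case $\psi=0$: there $\mu_\psi^T=0$ and $g_\psi(\epsilon)$ is undefined. A convention such as $D_{\mu_0^T}^\mp(q):=+\infty$ (or excluding $\{0\}$ explicitly) must be chosen consistently; the $G_\delta$ property at this isolated point is easily dealt with since $\mathcal H\setminus\{0\}$ is open in $\mathcal H$. A secondary minor point is the sign of $(q-1)\ln\epsilon$, which flips direction between the cases $q<1$ and $q>1$ when translating a $g_\psi(\epsilon)$-threshold into an $F_\psi(\epsilon)$-threshold; but since both $\{F_\psi(\epsilon)<c\}$ and $\{F_\psi(\epsilon)>c\}$ are open by continuity of $F_\psi(\epsilon)$, this does not affect the conclusion.
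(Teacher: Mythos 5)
The paper does not actually prove Proposition~\ref{propgdelta} in the text---it is imported verbatim from Proposition~3.1 of \cite{ACdeOLMP}---so there is no in-paper proof to compare against; your argument is correct and is essentially the standard one behind that result: pass to the mean $q$-dimensions via Proposition~\ref{GFDproposition}(1) so that the $\epsilon$-integrand is bounded and integrated against Lebesgue measure, prove continuity of $\psi\mapsto F_\psi(\epsilon)$ for each fixed $\epsilon$ by dominated convergence on the bounded set $\{|x|\le\|T\|+\epsilon\}$, and express the $\liminf$/$\limsup$ conditions as countable intersections of (possibly uncountable, hence still open) unions of open sets. The two loose ends you flag---the vector $\psi=0$, where $\mu_\psi^T=0$ and a convention is needed, and the sign of $(q-1)\ln\epsilon$ when translating thresholds on $g_\psi(\epsilon)$ into thresholds on $F_\psi(\epsilon)$---are indeed the only delicate points and are disposed of exactly as you indicate, since $\{0\}$ and $\mathcal H\setminus\{0\}$ are both $G_\delta$ and both $\{F_\psi(\epsilon)<c\}$ and $\{F_\psi(\epsilon)>c\}$ are open.
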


\begin{proposition}\label{propgdeltabis} Let $(X,d)$ be a regular space of bounded self-adjoint operators and let  $q > 0$, $q \neq 1$. Then, for every  $0 \neq \psi \in \mathcal{H}$ and every $\Gamma \geq 0$,
\begin{enumerate}
\item $\{T \mid  D_{\mu_\psi^T}^-(q) \leq  \Gamma \}$ is a $G_\delta$ set in~$X$,
\item $\{T \mid D_{\mu_\psi^T}^+(q) \geq  \Gamma \}$ is a $G_\delta$ set in~$X$. 
\end{enumerate}
\end{proposition}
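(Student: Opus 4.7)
The plan is to mirror the argument of Proposition~\ref{propgdelta}, now letting $T$ vary while $\psi$ is fixed. First, by Proposition~\ref{GFDproposition}(1), the generalized fractal dimensions coincide with the mean $q$-dimensions $m_{\mu_\psi^T}^{\mp}(q)$ for $q>0$, $q\neq 1$; these are defined through the Lebesgue integrals
\[I_\epsilon(T) := \int \mu_\psi^T(B(x,\epsilon))^q \, \mathrm d x,\]
which, unlike the integrals $\int(\cdot)^{q-1}\,\mathrm d\mu_\psi^T$ appearing in Definition~\ref{GFDdefinition}, have the virtue that only the integrand (and not the measure being integrated against) depends on $T$.

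The analytic core is to show that, for each $\epsilon>0$, the map $T \mapsto I_\epsilon(T)$ is lower semicontinuous on $X$. If $T_n \to T$ in $X$, then regularity gives strong resolvent convergence, and hence $\langle \psi, f(T_n)\psi\rangle \to \langle\psi, f(T)\psi\rangle$ for every bounded continuous $f$; that is, $\mu_\psi^{T_n} \to \mu_\psi^T$ weakly. The portmanteau theorem, applied to the open set $B(x,\epsilon)$, gives $\mu_\psi^T(B(x,\epsilon)) \leq \liminf_n \mu_\psi^{T_n}(B(x,\epsilon))$ for every $x \in \mathbb R$; since $t \mapsto t^q$ is continuous and nondecreasing on $[0,\infty)$, the same inequality survives after raising to the $q$-th power, and Fatou's lemma (applied to nonnegative integrands) upgrades it to $I_\epsilon(T) \leq \liminf_n I_\epsilon(T_n)$. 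Composing with the monotone continuous $\ln$ shows that $S(T,\epsilon) := \ln(\epsilon^{-1} I_\epsilon(T))/[(q-1)\ln\epsilon]$ is, for each $\epsilon \in (0,1)$, upper semicontinuous in $T$ when $q>1$ and lower semicontinuous when $0<q<1$, because the sign of $(q-1)\ln\epsilon$ flips between the two cases.

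The conclusion is then combinatorial. Using the identities
\[m_{\mu_\psi^T}^-(q) = \sup_{n \geq 2} \inf_{\epsilon \in (0,1/n)} S(T,\epsilon),\qquad m_{\mu_\psi^T}^+(q) = \inf_{n \geq 2} \sup_{\epsilon \in (0,1/n)} S(T,\epsilon),\]
one rewrites, when $q>1$,
\[\{T \in X \mid D_{\mu_\psi^T}^-(q) \leq \Gamma\} = \bigcap_{n,k \geq 2} \bigcup_{\epsilon \in (0,1/n)}\{T : S(T,\epsilon) < \Gamma + 1/k\},\]
which is $G_\delta$: each inner set is open by upper semicontinuity of $S(\cdot,\epsilon)$, arbitrary unions of open sets remain open, and the outer intersection is countable. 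The companion set $\{T : D_{\mu_\psi^T}^+(q) \geq \Gamma\}$ must be handled by complementation, yielding a countable union of intersections of closed sets (an $F_\sigma$), whose complement is again $G_\delta$. For $0<q<1$ the two roles swap: the direct formulation applies to $D^+$ and the complementary one to $D^-$, since the semicontinuity types are reversed.

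The main obstacle will be Step~2, the lower semicontinuity of $I_\epsilon$, which is the only place where the regularity hypothesis on $X$ is used. In contrast, the uncountability of the index set $(0,1/n)$ over which $\epsilon$ ranges causes no difficulty, precisely because arbitrary \emph{unions} of open sets remain open and arbitrary \emph{intersections} of closed sets remain closed, so countability is required only in the outer combinations over $n,k\in\mathbb N$; no approximation of $\epsilon$ by a countable dense subset is needed (which is fortunate, since $I_\epsilon(T)$ has no reason to be continuous in $\epsilon$ when $\mu_\psi^T$ has atoms).
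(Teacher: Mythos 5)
Your first two steps are sound: strong resolvent convergence does give weak convergence of $\mu_\psi^{T_n}$ to $\mu_\psi^T$ (all these measures have the same total mass $\Vert\psi\Vert^2$), and the portmanteau inequality for open balls together with Fatou's lemma does give \emph{lower} semicontinuity of $T\mapsto I_\epsilon(T)$. This correctly settles the two cases covered by your ``direct formulation'': $\{T\mid D_{\mu_\psi^T}^-(q)\le\Gamma\}$ for $q>1$ and $\{T\mid D_{\mu_\psi^T}^+(q)\ge\Gamma\}$ for $0<q<1$ are countable intersections of (arbitrary) unions of open sets. The gap is in the complementation step meant to handle the remaining two cases. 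Take $q>1$, so that $S(\cdot,\epsilon)$ is upper semicontinuous; then
\[
\{T\mid D_{\mu_\psi^T}^+(q)<\Gamma\}=\bigcup_{n,k\ge 2}\;\bigcap_{\epsilon\in(0,1/n)}\{T\mid S(T,\epsilon)\le\Gamma-1/k\},
\]
and the inner sets are \emph{sublevel} sets of an upper semicontinuous function. For u.s.c.\ $S$ it is the superlevel sets $\{S\ge c\}$ that are closed, while $\{S\le c\}$ is only a $G_\delta$; an uncountable intersection of $G_\delta$ sets need not be $F_\sigma$, nor even Borel, so the complement is not exhibited as an $F_\sigma$. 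Writing the set directly as $\bigcap_{n,k}\bigcup_{\epsilon}\{S\ge\Gamma-1/k\}$ does produce closed inner sets, but an uncountable union of closed sets is equally uncontrolled. The same mismatch defeats $\{D^-_{\mu_\psi^T}(q)\le\Gamma\}$ when $0<q<1$. Note that these are precisely the instances used in the proof of Theorem~\ref{mainproposion} (e.g.\ $X^+(\psi)=\bigcap_{q\in\mathbb{N},\,q\ge2}\{T\mid D_{\mu_\psi^T}^+(q)\ge1\}$), so the missing half is not optional.

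What is missing is a matching \emph{upper} semicontinuity statement for some quantity comparable to $I_\epsilon$, and this is not obtained by symmetry. The natural candidate is the closed-ball version $\bar I_\epsilon(T):=\int\mu_\psi^T(\overline{B(x,\epsilon)})^q\,\mathrm dx$: portmanteau for closed sets gives $\limsup_n\mu_\psi^{T_n}(\overline{B(x,\epsilon)})\le\mu_\psi^T(\overline{B(x,\epsilon)})$ pointwise, and since $I_\epsilon\le\bar I_\epsilon\le I_{2\epsilon}$ the dimensions $D^\mp$ are unchanged by the substitution. But upgrading the pointwise inequality to $\limsup_n\bar I_\epsilon(T_n)\le\bar I_\epsilon(T)$ requires the \emph{reverse} Fatou lemma, hence an integrable dominating function; for $q\ge1$ one can get this from the tightness of $\{\mu_\psi^{T_n}\}$ via $\mu(\overline{B(x,\epsilon)})^q\le\Vert\psi\Vert^{2(q-1)}\mu(\overline{B(x,\epsilon)})$ and Fubini, but for $0<q<1$ no such domination is available in general: one can construct bounded diagonal operators $T_n\to T$ in the strong resolvent sense whose spectral measures place a vanishing amount of mass on widely spread, well-separated atoms far from the origin, making $I_\epsilon(T_n)\to\infty$ while $I_\epsilon(T)$ stays finite, so $I_\epsilon$ (and $\bar I_\epsilon$) genuinely fails to be upper semicontinuous for small $q$. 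So the two halves of the proposition are not interchangeable under your scheme, and the case $0<q<1$ of item~1 in particular needs an idea beyond semicontinuity of these integrals (the paper itself defers to the argument of Proposition~3.1 of \cite{ACdeOLMP}, whose $\psi$-continuity estimate, uniform over Borel sets, has no analogue in the $T$ variable). As written, the proposal proves only half of the statement.
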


Since the proof of Proposition~\ref{propgdeltabis} is based on the same arguments presented in the proof of Proposition~\ref{propgdelta} (discussed in details in~\cite{ACdeOLMP}), it will be omitted.


\section{Proofs}\label{mainsingularsection}


\subsection{Proof of Theorem \ref{stabtheorem}}\label{subsectProofThmHerig}

\noindent {\em 1.} 
\noindent {\bf Case 1.} $T$ has an eigenvalue. Then, there exist $\lambda \in \mathbb{R}$ and $0 \neq \eta \in \mathcal{H}$ such that $T\eta = \lambda \eta$. Set, for each $\psi \in \mathcal{H}$ and each $k\in\mathbb{N}$, $\psi_k:=\psi + \frac{1}{k}\eta$ and note that $\displaystyle\lim_{k \to \infty}\Vert\psi_k-\psi\Vert=0$; for all sufficiently large $k$, $P^T_{\mathrm p}\psi_k \neq 0$, where $P^T_{\mathrm p}$ is the orthogonal projection onto the pure point subspace of~$T$ (namely, this is true for each $k\in\mathbb{N}$ if $P^T_{\mathrm p}\psi=0$; otherwise, there exists $k_0$ such that for each $k\ge k_0$, $\Vert P^T_{\mathrm p}\psi-\frac{1}{k}\eta\Vert>0$). Therefore, since $\{\psi \in {\mathcal{H}}\mid \Vert P^T_{\mathrm p} \psi \Vert > 0\}$ is open and dense, the result follows from the set inclusions  
\[\{\psi \in {\mathcal{H}} \mid D_{\mu_\psi^T}^-(s) \leq  \beta\} \supset \{\psi \in {\mathcal{H}} \mid D_{\mu_\psi^T}^-(s) =0\} \supset \{\psi \in {\mathcal{H}} \mid \Vert P_{\mathrm p} \psi \Vert > 0\}.\]

Namely, if $\xi:=P^T_{\mathrm{p}}\psi\neq 0$, then $\mu_\xi^T$ has an atom, i.e, there exists $\zeta\in\mathbb{R}$ such that $\mu_{\xi}^T(\{\zeta\})>0$;  so, since for each $s>1$ and each $0<\epsilon <1$, $(s-1)\ln \epsilon <0$, one has 
\begin{equation}\label{GFD0}
D_{\mu_{\xi}^T}^-(s) \leq D_{\mu_{\xi}^T}^+(s)  = \limsup_{\epsilon \to 0} \frac{\ln\left[ \int \mu_{\xi}^T(B(x,\epsilon))^{s-1} \mathrm{d}\mu_{\xi}^T(x) \right]}{(s-1)\ln \epsilon} \leq \limsup_{\epsilon \to 0} \frac{\ln\left[\mu_{\xi}^T(\{\zeta\})^s \right]}{(s-1)\ln \epsilon} =0.     
\end{equation}

\

\noindent {{\bf Case 2.}} The spectrum of $T$ is purely continuous. Let $0 \neq \varphi \in \mathcal{H}$ be such that $D_{\mu_{\varphi}^T}^-(s) \leq \beta$.  Firstly, let us build a sequence of decreasing compact sets $(A_k)$ so that, for each $k \geq 1$, $D_{\mu^k}^-(s) \leq \beta$, where, for every Borel set $\Lambda \subset \mathbb{R}$, $\mu^k(\Lambda) := \mu_{\varphi}^T(\Lambda \cap A_k)$. 

Let $r>0$ be such that $\supp(\mu_\varphi^T) \subset [-r,r]$;  set $I_1 := [-r,0]$ and $I_2 := [0,r]$. Since $D_{\mu_{\varphi}^T}^-(s) \leq \beta$, one has, for every $\sigma>0$,   
\begin{eqnarray*}
\infty &=& \limsup_{\epsilon \to 0}  \epsilon^{(\sigma+\beta)(1-s)}   \int_{-r}^r \mu_\varphi^T(B(x,\epsilon))^{s-1} {\rm d}\mu_{\varphi}^T(x)\\ &\leq&  \sum_{j=1}^2\limsup_{\epsilon \to 0} \epsilon^{(\sigma+\beta)(1-s)}  \int_{I_j} \mu_\varphi^T(B(x,\epsilon))^{s-1} {\rm d}\mu_{\varphi}^T(x),
\end{eqnarray*}
and so, there exists $j_1 \in \{1,2\}$ such that 
\[\limsup_{\epsilon \to 0} \epsilon^{(\sigma+\beta)(1-s)}  \int_{I_{j_1}} \mu_\varphi^T(B(x,\epsilon))^{s-1} {\rm d}\mu_{\varphi}^T(x) = \infty.\]

Now, write $I_{j_1} = [a_1,b_1]$ and define  $A_1 := L_1 \cup I_{j_1} \cup L_1'$, where $L_1 = [-|I_{j_1}|/4+a_1,a_1]$ and $L_1' = [b_1,b_1+|I_{j_1}|/4]$. Set $\mu^1(\cdot) := \mu_{\varphi}^T(\cdot\cap A_1)$. Then, for every $0<\epsilon < |I_{j_1}|/4$,
\begin{eqnarray*}
\int_{A_1} \mu^1(B(x,\epsilon))^{s-1} {\rm d}\mu^1(x)  &=& \int_{A_1} \mu_{\varphi}^T(A_1 \cap B(x,\epsilon))^{s-1} {\rm d}\mu_{\varphi}^T(x) \\ &\geq&  \int_{I_{j_1}} \mu_{\varphi}^T(A_1 \cap B(x,\epsilon))^{s-1}{\rm d}\mu_{\varphi}^T(x)\\ &=& \int_{I_{j_1}} \mu_{\varphi}^T(B(x,\epsilon))^{s-1}{\rm d}\mu_{\varphi}^T(x).
\end{eqnarray*}
Thus, for every $\sigma>0$,
\[\limsup_{\epsilon \to 0} \epsilon^{(\sigma+\beta)(1-s)} \int_{A_1} \mu^1(B(x,\epsilon))^{s-1} {\rm d}\mu^1(x) = \infty,\]
and so, $D_{\mu^1}^-(s) \leq \beta$. Using the same reasoning as before, there is a closed interval $[a_2,b_2] =: I_{j_2} \subset A_1$ such that $|I_{j_2}| = \frac{1}{2} |A_1|$  and 
\[\limsup_{\epsilon \to 0} \epsilon^{(\sigma+\beta)(1-s)} \int_{I_{j_2}} \mu_\varphi^T(B(x,\epsilon))^{s-1} {\rm d}\mu_{\varphi}^T(x) = \infty\]
($I_{j_2}$ is ``one half'' of $A_1$). Then, define $A_2 := L_2 \cup I_{j_2} \cup  L'_2$, where $L_2 = [-|I_{j_2}|/4+a_2,a_2]$ and $L_2' = [b_2,b_2+|I_{j_2}|/4]$. Again, 
it follows that $D_{\mu^2}^-(s) \leq \beta$, where $\mu^2(\cdot):=\mu_\varphi^T(\cdot\cap A_2)$; note that
\[|A_2| = \frac{1}{2}|I_{j_2}| + |I_{j_2}| =  \frac{3}{2}|I_{j_2}| = \frac{3}{4} |A_1| = \frac{3}{4} \frac{3r}{2}.\]

Proceeding in this way, one builds a decreasing sequence of closed intervals $A_{k+1} \subset A_{k}$ such that $|A_k| \rightarrow 0$ as $k \rightarrow \infty$ (namely, $|A_k|=(3/4)^{k-1}(3r/2)$) and, for every $k \geq 1$, $D_{\mu^k}^-(s) \leq \beta$, with $\mu^k(\cdot) := \mu_{\varphi}^T(\cdot \cap A_k)$. Since each set $A_{k}$ is a compact interval, there exists $\Gamma \in [-r,r]$ such that $A_{k} \downarrow \{\Gamma\}$. 

Finally, for every $\psi \in \mathcal{H}$ and every $k \geq 1$, set $\psi_k := P^T({\mathbb{R}}\backslash A_k)\psi + \frac{1}{k} \varphi $. Since $T$ has purely continuous spectrum, $\displaystyle\lim_{k \to \infty}\Vert\psi_k-\psi\Vert=0$. Now, one has, for every $k \geq 1$, every $0<\epsilon<1 $ and every $x\in\mathbb{R}$,
\begin{eqnarray*}
\mu_{\psi_k}^T(B(x,\epsilon)) &\geq& \mu_{\psi_k}^T(B(x,\epsilon) \cap A_k)\\ &\geq& \frac{2}{k} {\rm Re} \langle P^T(B(x,\epsilon) \cap A_k)  P^T({\mathbb{R}}\backslash A_k)\psi , \varphi \rangle + \frac{1}{k^2} \mu_{\varphi}^T(B(x,\epsilon) \cap A_k)\\ &=& \frac{1}{k^2} \mu_{\varphi}^T(B(x,\epsilon) \cap A_k) = \frac{1}{k^2}\mu^k(B(x,\epsilon)),
\end{eqnarray*}
from which follows that 
\[\frac{\ln\left[ \frac{1}{\epsilon}\int \mu_{\psi_k}^T(B(x,\epsilon))^s \mathrm{d}x \right]}{(s-1)\ln \epsilon}  \leq  \frac{\ln \left[\frac{1}{k^{2s}} \frac{1}{\epsilon} \int \mu^k(B(x,\epsilon))^s \mathrm{d}x \right]}{(s-1)\ln \epsilon};\]
thus, for every $k \geq 1$, by Proposition~\ref{GFDproposition} {\em 1.}, $D_{\psi_k}^-(s) \leq  D_{\mu^k}^-(s) \leq  \beta$. Since $\psi$ is arbitrary, $\{\xi \in {\mathcal{H}} \mid D_{\mu_\xi^T}^-(s) \leq  \beta\}$ is dense in $\mathcal{H}$ and so, by Proposition~\ref{propgdelta}, it is a dense $G_\delta$ set in $\mathcal{H}$. 

\

\noindent {\em 2.} Since the case $\alpha=0$ is trivial we let $\alpha >0$. Let $0 \neq \varphi \in \mathcal{H}$ be such that $D_{\mu_{\varphi}^T}^+(q) \geq \alpha$. Again, let $r>0$ be such that $\supp(\mu_\varphi^T) \subset [-r,r]$;  set $I_1 := [-r,0]$ and $I_2 := [0,r]$. Since $D_{\mu_{\varphi}^T}^+(q) \geq \alpha$, it follows that, for every $0<\sigma<\alpha$,   
\begin{eqnarray*}
\infty &=& \limsup_{\epsilon \to 0}  \epsilon^{(\alpha - \sigma)(1-q)}   \int_{-r}^r \mu_\varphi^T(B(x,\epsilon))^{q-1} {\rm d}\mu_{\varphi}^T(x)\\ &\leq&  \sum_{j=1}^2\limsup_{\epsilon \to 0} \epsilon^{(\alpha - \sigma)(1-q)}\int_{I_j} \mu_\varphi^T(B(x,\epsilon))^{q-1} {\rm d}\mu_{\varphi}^T(x).
\end{eqnarray*}
Thus, for some $j_1 \in \{1,2\}$, 
\[\limsup_{\epsilon \to 0} \epsilon^{(\alpha - \sigma)(1-q)} \int_{I_{j_1}} \mu_\varphi^T(B(x,\epsilon))^{q-1} {\rm d}\mu_{\varphi}^T(x) = \infty.\]
Let $B_1 := I_{j_1}$ and set 
$\lambda^1(\cdot) := \mu_{\varphi}^T(\cdot\cap B_1)$. Then, for every $\epsilon >0$, 
\begin{eqnarray*}
\int_{B_1} \lambda^1(B(x,\epsilon))^{q-1} {\rm d}\lambda^1(x)  &=& \int_{B_1} \lambda^1(B(x,\epsilon))^{q-1} {\rm d}\mu_{\varphi}^T(x)\\ \\ &=& \int_{B_1} \mu_{\varphi}^T(B_1 \cap B(x,\epsilon))^{q-1}{\rm d}\mu_{\varphi}^T(x)\\ &\geq& \int_{B_1} \mu_{\varphi}^T(B(x,\epsilon))^{q-1}{\rm d}\mu_{\varphi}^T(x),
\end{eqnarray*}
where we have used in the last inequality the fact that $0<q<1$. Thus, for every $0<\sigma<\alpha$,
\[\limsup_{\epsilon \to 0} \epsilon^{(\alpha - \sigma)(1-q)} \int_{B_1} \lambda^1(B(x,\epsilon))^{q-1} {\rm d}\lambda^1(x) = \infty\]
and so, $D_{\lambda^1}^+(q) \geq \alpha$. Proceeding in this way, we build a decreasing sequence of closed intervals $B_{k+1} \subset B_{k}$ so that $|B_k| \rightarrow 0$ as $k \rightarrow \infty$ and, for each $k \geq 1$, $D_{\lambda^k}^+(q) \geq \alpha$, where
$\lambda^k(\cdot) := \mu_{\varphi}^T(\cdot \cap B_k)$. Since each set $B_{k}$ is a compact interval, there exists a $\Gamma \in [-r,r]$ such that $B_{k} \downarrow \{\Gamma\}$. 

Now set, for every $k \geq 1$, $A_k := B_k \setminus \{\Gamma\}$, and note that  $A_{k} \downarrow \emptyset$. Moreover, using the same reasoning as before, it follows that for every $k \geq 1$, $D_{\mu^k}^+(q) \geq \alpha$, where $\mu^k(\cdot) := \mu_{\varphi}^T(\cdot\cap A_k)$. Namely, given that $0<q<1$, one has for every $k \geq 1$ and every $\epsilon>0$, 
\begin{eqnarray*}
\epsilon^{(\alpha - \sigma)(1-q)}\int_{A_k} \mu^k(B(x,\epsilon))^{q-1} {\rm d}\mu^k(x) &=& \epsilon^{(\alpha - \sigma)(1-q)} \int_{A_k} \mu^k(B(x,\epsilon))^{q-1} {\rm d}\lambda^k(x)\\ &\geq&  \epsilon^{(\alpha - \sigma)(1-q)} \int_{A_k} \lambda^k(B(x,\epsilon))^{q-1} {\rm d}\lambda^k(x)\\ &=&  \epsilon^{(\alpha - \sigma)(1-q)} \int_{B_k} \lambda^k(B(x,\epsilon))^{q-1} {\rm d}\lambda^k(x)\\ &-& \epsilon^{(\alpha - \sigma)(1-q)} \lambda^k(B(\Gamma,\epsilon))^{q-1} \lambda^k(\{\Gamma\}).  
\end{eqnarray*}
We also have that 
\[\lim_{\epsilon \to 0} \epsilon^{(\alpha - \sigma)(1-q)} \lambda^k(B(\Gamma,\epsilon))^{q-1} \lambda^k(\{\Gamma\}) \leq \lim_{\epsilon \to 0}  \epsilon^{(\alpha - \sigma)(1-q)}  \lambda^k(\{\Gamma\})^q =0\]
if $\Gamma$ is an atom, and that \[\epsilon^{(\alpha - \sigma)(1-q)}\lambda^k(B(\Gamma,\epsilon))^{q-1} \lambda^k(\{\Gamma\}) = 0,\]
otherwise. Hence,
\begin{eqnarray*}
\limsup_{\epsilon \to 0} \epsilon^{(\alpha - \sigma)(1-q)} \int_{A_k} \mu^k(B(x,\epsilon))^{q-1} {\rm d}\mu^k(x) = \infty.
\end{eqnarray*}

Finally, for every $\psi \in \mathcal{H}$ and every $k \geq 1$, define $\psi_k:=P^T({\mathbb{R}} \backslash  A_k)\psi + \frac{1}{k} \varphi$, and note that $\displaystyle\lim_{k \to \infty}\Vert\psi_k-\psi\Vert=0$. Now, one has, for every $k \geq 1$, every $0<\epsilon<1 $ and every $x\in\mathbb{R}$,
\[\mu_{\psi_k}^T(B(x,\epsilon)) \geq \mu_{\psi_k}^T(B(x,\epsilon) \cap A_k) \geq \frac{1}{k^2} \mu_{\varphi}^T(B(x,\epsilon) \cap A_k) = \frac{1}{k^2}\mu^k(B(x,\epsilon)),\]
from which follows that 
\[\frac{\ln \left[\frac{1}{\epsilon}\int \mu_{\psi_k}^T(B(x,\epsilon))^q \mathrm{d}x \right]}{(q-1)\ln \epsilon}  \geq \frac{\ln\left[\frac{1}{k^{2q}} \frac{1}{\epsilon} \int \mu^k(B(x,\epsilon))^q \mathrm{d}x \right]}{(q-1)\ln \epsilon};\]
thus, for every $k \geq 1$,  by Proposition~\ref{GFDproposition} {\em 1.}, $D_{\psi_k}^+(q) \geq  D_{\mu^k}^+(q) \geq \alpha$. Since $\psi$ is arbitrary, $\{\xi \in {\mathcal{H}} \mid D_{\mu_\xi^T}^+(q) \geq  \alpha\}$ is dense in $\mathcal{H}$ and so, by Proposition~\ref{propgdelta}, it is a dense $G_\delta$ set in $\mathcal{H}$. 


\subsection{Proof of Theorem~\ref{mainproposion}} \label{proofThmDims}

We need the following claims. 

\

\noindent {\bf Claim I:} Let $T$ be a bounded self-adjoint operator in $\mathcal{H}$. If $T$ has purely absolutely continuous spectrum, then 
\[\mathcal{N}^+(T) := \{\psi \mid D_{\mu_{\psi}^T}^+(q) = 1, { \rm \, \, for \, \, all} \, \, q >0, q \ne 1\}\]
is a dense $G_\delta$ set in $\mathcal{H}$.  

\

\noindent {\bf  Claim II:} Let $T$ be a bounded self-adjoint operator in $\mathcal{H}$. If $T$ has pure point spectrum, then
\[\mathcal{N}^-(T) := \{\psi \mid D_{\mu_{\psi}^T}^-(q) = 0, { \rm \, \, for \, \, all} \, \,q >0, q \ne 1 \}\]
is a dense $G_\delta$ set in $\mathcal{H}$.  

\

Since $(X,d)$ is a separable space, $C_{{\rm ac}} \subset X$ (with the induced topology) is also separable. Let $\{T_{j}\}$ be a dense sequence in  $C_{{\rm ac}}$ (being, therefore, dense in~$X$, since $C_{{\rm ac}}$ is dense in~$X$ by hypothesis). By {\bf  Claim I},   
\[\mathcal{N}^+ := \displaystyle\bigcap_{j} \mathcal{N}^+(T_j)\]
is a dense $G_\delta$ set in $\mathcal{H}$. Then, by Proposition~\ref{propgdeltabis}, for every $ \psi \in \mathcal{N}^+$,
\[X^+(\psi) := \{T \mid D_{\mu_{\psi}^T}^+(q) = 1, { \rm \; for \, all} \; q >0, q \ne 1 \} \supset \cup_{j}\{T_{j}\}\]
is a dense $G_\delta$ set in~$X$.

Similarly, since $C_{{\rm p}} \subset X$ is a separable space, let $\{S_{j}\}$ be a dense sequence in  $C_{{\rm p}}$. By {\bf  Claim II},   
\[\mathcal{N}^- := \displaystyle\bigcap_{j} \mathcal{N}^-(S_j)\]
is a dense $G_\delta$ set in $\mathcal{H}$. Then, it follows again by Proposition~\ref{propgdeltabis} that, for every $ \psi \in \mathcal{N}^-$,
\[
\{T \mid D_{\mu_{\psi}^T}^-(q) = 0, { \rm \;\; for \, all} \;\;  q >0, q \ne 1\}\supset \cup_{j}\{S_{j}\}\]
is a dense $G_\delta$ set in~$X$.

Combining the previous results, it follows that for every $\psi \in \mathcal{M} := \mathcal{N}^+ \cap \mathcal{N}^-$,
\[\{T \mid D_{\mu_{\psi}^T}^-(q) = 0 {\rm \;\; and \;\;} D_{\mu_{\psi}^T}^+(q) = 1 { \rm \; for \, all} \; q >0, q \ne 1   \}\]
is a dense $G_\delta$ set in~$X$. Hence, as the functions $q\mapsto D_\mu^\mp(q)$ are nonincreasing, follows that, for each  $\psi \in \mathcal{M}$, the set  $X_{{01}}(\psi) = \{T\in X \mid D_{\mu_{\psi}^T}^-(q) =0$ and $D_{\mu_{\psi}^T}^+(q) =1$, for all  $q >0 \}$ is residual in~$X$.

Now, it remains to prove the claims.

\

\noindent  {\bf Proof of Claim I:} Let $0 \neq \psi \in \mathcal{H}$. It follows from Proposition~\ref{scexpproposition0} that for~$\mu_{\psi}^T$-a.e.\ $x$,  $d_{\mu_{\psi}^T}^-(x) = 1$. So, there exists $\Omega \subset \mathbb{R}$ such that $\mu_{\psi}^T(\Omega) = \Vert \psi \Vert^2$ and such that, for every $x \in \Omega$, $d_{\mu_{\psi}^T}^-(x) =1$. Now let, for every $\epsilon > 0$, $f_\epsilon : \Omega  \longrightarrow \mathbb{R}$ be the measurable function given by the law
\[f_\epsilon(x) := \displaystyle\inf_{\epsilon> r} \frac{\ln \mu_{\psi}^T (B(x,r))}{\ln r}.\]
The sequence $(f_\epsilon(x))$ converges pointwise to $d_{\mu_{\psi}^T}^-(x)$; so,  by Egoroff's Theorem, there exist Borel sets $S_k \uparrow \Omega$ such that, for every $k \geq 1$, $\mu_{\psi}^T(S_k^c) < 1/k$ and such that $\displaystyle\lim_{\epsilon \downarrow 0}f_\epsilon(x)=d_{\mu_{\psi}^T}^-(x)$ uniformly on $S_k$. 
But then, given $0<\sigma < 1$, there exists $0 < \epsilon_{\sigma,k} < 1$ such that, for every $0 < \epsilon < \epsilon_{\sigma,k}$ and for every $x \in S_k$,
\begin{equation*}
\mu_{\psi}^T(B(x, \epsilon)) \leq \epsilon^{1 - \sigma}.    
\end{equation*}
Now, set $\psi_k:=P^T(S_k)\psi$ and note that: 
\[\displaystyle\lim_{k \to \infty}\Vert\psi_k-\psi\Vert^2=\lim_{k \to \infty}\Vert P^T(S_k^c)\psi\Vert^2\leq \lim_{k \to \infty} 1/k=0;\] 
for every $k \geq 1$, every $q>1$ and every $0<\epsilon<1$,

\[\frac{\ln \int \mu_{\psi_k}^T(B(x,\epsilon))^{q-1} {\rm d}\mu_{\psi_k}^T(x)  }{(q-1)\ln \epsilon} \geq \frac{2 \ln \Vert \psi_k \Vert}{(q-1)\ln \epsilon} + (1 -  \sigma),\]
from which follows that $D_{\mu_{\psi_k}^T}^\mp(q)=1$, since $0 < \sigma \leq 1$ is arbitrary. Hence, by Proposition~\ref{propgdelta},
\[\mathcal{N}^+(T) = \{\psi \mid D_{\mu_{\psi}^T}^+(q) = 1 { \rm \, \, for \, \, all}  \, \, q>1, \, q \in \mathbb{N}\},\]
 is a dense $G_\delta$ set in $\mathcal{H}$ (note that the above equality holds because $r \mapsto D_{\mu}^{+}(r)$ is a nonincreasing function). 

\

\noindent {\bf Proof of Claim II:} We note that this claim has been proven in Theorem 3.1. in \cite{ACdeOLMP}.  For the convenience of the reader, we present its proof in details. Let  $(\eta_j)$ be an orthonormal family of eigenvectors of~$T$, that is, $T\eta_j = \lambda_j \eta_j$ for every $j\ge 1$. Let, for every $0<q<1$, $(b_j) \subset \mathbb{C}$ be a sequence such that $|b_j| >0$, for all $j \geq 1$,  and $\sum_{j=1}^\infty \vert b_j \vert^{2q} < \infty$.  Given $\psi \in \mathcal{H}$, write $\psi = \sum_{j=1}^\infty a_j \eta_j$, and then consider, for each $k \geq 1$, 
\[\psi_k := \displaystyle\sum_{j=1}^k a_j \eta_j + \displaystyle\sum_{j=k+1}^\infty b_j \eta_j.\]
It is clear that $\psi_k \rightarrow \psi$. Moreover, for  $k \geq 1$ and each $\epsilon>0$,
\begin{eqnarray*}\label{theoGFD1}
\nonumber \int \mu_{\psi_k}^T(B(x,\epsilon))^{q-1} {\mathrm d}\mu_{\psi_k}^T(x) &=& \displaystyle\sum_{j=1}^\infty \mu_{\psi_k}^T(B(\lambda_j,\epsilon))^{q-1} \mu_{\psi_k}^T (\{\lambda_j\})\\ &\leq& \displaystyle\sum_{j=1}^\infty \mu_{\psi_k}^T (\{\lambda_j\})^q = \displaystyle\sum_{j=1}^k \vert a_j \vert^{2q} + \displaystyle\sum_{j=k+1}^\infty \vert b_j \vert^{2q},
\end{eqnarray*}
from which follows that $D_{\mu_{\psi_k}^T}^\mp(q) = 0$. Hence, for every $0<q<1$, $\{\psi \mid D_{\mu_{\psi}^T}^-(q) = 0\}$
is a dense set in $\mathcal{H}$; so, by Proposition~\ref{propgdelta},
\[\mathcal{N}^-(T) = \{\psi \mid D_{\mu_{\psi}^T}^-(q) = 0 { \rm \, \, for \, \, all} \, \, 0 < q < 1; \, q \in \mathbb{Q}\}\]
is a dense $G_\delta$ set in $\mathcal{H}$ (again, the above equality holds because $r \mapsto D_{\mu}^{-}(r)$ is a nonincreasing function).

 
\begin{center} \Large{Acknowledgments} 
\end{center}
\addcontentsline{toc}{section}{Acknowledgments}

MA thanks the partial support by CAPES (a Brazilian government agency). SLC thanks the partial support by FAPEMIG (Minas Gerais state agency; Universal Project under contract 001/17/CEX-APQ-00352-17) and CRdO thanks the partial support by CNPq (a Brazilian government agency, under contract 303503/2018-1).

\ 

\noindent {\bf Conflict of interest:} On behalf of all authors, the corresponding author states that there is no conflict of
interest.

\noindent  Email: moacir@ufam.edu.br, Departamento de Matem\'atica, UFAM, Manaus, AM, 69067-005 Brazil

\noindent  Email: silas@mat.ufmg.br, Departamento de Matem\'atica, UFMG, Belo Horizonte, MG, 30161-970 Brazil

\noindent  Email: oliveira@dm.ufscar.br,  Departamento  de  Matem\'atica,   UFSCar, S\~ao Carlos, SP, 13560-970 Brazil

\end{document}